\newtheorem{theorem}{Theorem}[section]
\newtheorem{lemma}[theorem]{Lemma}
\newtheorem{corollary}[theorem]{Corollary}
\theoremstyle{definition}
\newtheorem{remark}{Remark}
\newtheorem{definition}[theorem]{Definition}
\numberwithin{equation}{section}
\newcommand{\ZZ}{\mathbb{Z}}
\newcommand{\logphi}[1]{\mathrm{log}_{\varphi}\left( #1 \right)}
\newcommand{\logPhi}[1]{\mathrm{log}_{\phi_{k}}\left( #1 \right)}
\newcommand{\floorof}[1]{\left\lfloor #1 \right\rfloor}
\newcommand{\set}[1]{\{ #1 \}}
\newcommand\blfootnote[1]{%
  \begingroup
  \renewcommand\thefootnote{}\footnote{#1}%
  \addtocounter{footnote}{-1}%
  \endgroup
}
\title{Variants of Conway Checkers and $k$-nacci Jumping}
\date{}
\author{Glenn Bruda, Joseph Cooper, Kareem Jaber, \\ Raul Marquez and Steven J. Miller}
\begin{document}

\maketitle

\setcounter{section}{0}

\begin{abstract}

Conway Checkers is a game played with a checker placed in each square of the lower half of an infinite checkerboard. Pieces move by jumping over an adjacent checker, removing the checker jumped over. Conway showed that it is not possible to reach row 5 in finitely many moves by weighting each cell in the board by powers of the golden ratio such that no move increases the total weight.

Other authors have considered the game played on many different boards, including generalising the standard game to higher dimensions. We work on a board of arbitrary dimension, where we allow a cell to hold multiple checkers and begin with $m$ checkers on each cell. We derive an upper bound and a constructive lower bound on the height that can be reached, such that the upper bound is almost always equal to the lower bound.

We also consider the more general case where instead of jumping over 1 checker, each checker moves by jumping over $k$ checkers, and again show the maximum height reachable lies within bounds that are almost always equal.

\end{abstract}

\renewcommand\abstractname{Dedication}
\begin{abstract}
This paper is dedicated with thanks to Peter G. Anderson, Marjorie Bicknell-Johnson and William Webb. In a similar fashion to the pagoda functions crucial to this paper, their tireless effort and leadership has allowed the journal and association to take only an upward trajectory for decades, and it is a great pleasure to acknowledge their service and mentorship.
\end{abstract}

\tableofcontents

\blfootnote{
\newline
Glenn Bruda: \href{mailto:glenn.bruda@ufl.edu}{glenn.bruda@ufl.edu} (University of Florida),

\noindent Joseph Cooper: \href{mailto:jc2407@cam.ac.uk}{jc2407@cam.ac.uk} (University of Cambridge),

\noindent Kareem Jaber: \href{mailto:kj5388@princeton.edu}{kj5388@princeton.edu} (Princeton University), 

\noindent Raul Marqeuz:
\href{mailto:raul.marquez02@utrgv.edu}{raul.marquez02@utrgv.edu} (University of Texas Rio Grande Valley),

\noindent Steven J. Miller:
\href{mailto:sjm1@williams.edu}{sjm1@williams.edu} (Williams College).
}

\newpage

\section{Introduction}

First introduced by John Conway and summarised well in \cite{winningways}, Conway Checkers, or Conway Soldiers, is an interesting mathematical game with a beautiful and possibly counterintuitive solution.

To set up the game, the player places a checker on each square below some initial line on an infinite checkerboard. The game is then to try and move a checker as high up the board as possible, where a checker moves by jumping over another checker it shares an edge with, removing the piece jumped over from the board.

Conway's solution involved weighting each square of the board by powers of the golden ratio $\varphi$, such that whatever move the player makes, the total weight of all the squares with a checker on never increases. We follow terminology introduced by Conway and call this weight function a pagoda function. Using this, it is possible to show that, in finitely many moves, it is not possible to reach the 5\textsuperscript{th} row above the initial line.

The use of these pagoda functions is very common in the study of games similar to Conway Checkers, for example Peg Solitaire. For a foundational text, see again \cite{winningways}.

In \cite{row5}, the authors show a construction of how to reach row 5 when infinitely many moves are allowed to be made, as long as you consider each infinite amount of moves to occur in a finite time period, so it is possible to perform multiple sequences of infinite moves consecutively.

Many generalisations of this game have been considered. Some involve changing the moves allowed. In \cite{minsize}, the authors consider the game except pieces move as in regular checkers, as introduced in \cite{hexagonal}, jumping only diagonally. They also consider the case where pieces can move either diagonally or orthogonally.

Also in \cite{minsize}, the authors use variations on Conway's pagoda function, as well as some constructions, to find the minimal number of checkers that are required to reach certain rows in various different games, including the two mentioned above. This is also done in \cite{desert}, where the author uses an entirely different pagoda function to Conway's, weighting each square on an arbitrarily large finite board by the $(n-t)$\textsuperscript{th} Fibonacci number, where $t$ is the square's taxicab distance from a target square and $n$ is large enough.

Variations on the board the game is played on have also been considered. In \cite{hexagonal}, the authors introduce the game played on a board with hexagonal cells, so that each piece has six choices of directions to jump. This is built upon by \cite{minsize}, who again prove properties about the minimal number of checkers to reach the maximum row.

\cite{gunpaper} provides constructions on how to reach the highest possible rows on a number of different initial boards, including a board where all but the first quadrant is initially filled, and the same board rotated by an eighth of a turn.

In \cite{graphgeneralisation}, the author generalises the game to be played on an arbitrary connected, countable and locally finite graph. They show that on such a graph, a point is unreachable from some initial condition where all checkers are at least some distance $k$ away if the size of the set of points distance $n$ from that point is $O(\varphi^{\varepsilon n})$ for an $\varepsilon < 1$.

In \cite{ddimpaper}, the game is generalised from 2-dimensions to $d$-dimensions. The authors show that it is always possible to bring a checker to row $3d-2$ and never row $3d-1$. We generalise this result further.

We consider a variant of the the game played in $d$-dimensions, as in \cite{ddimpaper}, where each square can hold any number of checkers, and the initial board position has $m$ checkers occupying the squares instead of 1. We also consider the further generalisation, still with $m$ checkers on each square, where instead of jumping over 1 checker, each checker moves by jumping over $k-1$ checkers in a straight line, for some $k \geq 2$.  We call this game the Conway $(m,k,d)$-game and prove the following theorem.

\begin{theorem} \label{mainresult}
    In the Conway $(m,k,d)$-game, with $m>1$, the maximum row attainable $n_M$ satisfies
    \begin{equation} \label{firstresult}
    \floorof{\logPhi{m} + \logPhi{\frac{(\phi_{k}+1)^{d-1}}{(\phi_{k}-1)^d}}} \ \leq \ n_M \ \leq \ \floorof{\logPhi{m} + \logPhi{\frac{(\phi_{k}+1)^{d-1}}{(\phi_{k}-1)^d}}} + 1,
    \end{equation}
    where $\phi_{k}$ is the $k$-nacci constant as defined in Lemma \ref{knacciconstant}. In particular, we have that for almost all values of $m$, the upper bound is attainable. When $k=2$ we have that for all $m \in \mathbb{N}$ 
    \begin{equation} \label{1point1}
    \floorof{\logphi{m}} + 3d-2 \ \leq \ n_M \ \leq \ \floorof{\logphi{m}} + 3d-1.
    \end{equation}
    Again, this upper bound is attainable for almost all $m$.
\end{theorem}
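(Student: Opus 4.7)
The plan is to establish the upper and lower bounds separately and then deduce the $k=2$ specialisation and the ``almost all $m$'' refinement.

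For the upper bound I would adapt Conway's pagoda argument to the $k$-nacci setting. Fix a target cell $\mathbf{t}$ in row $n$ and set $f(\mathbf{x})=\phi_{k}^{-\|\mathbf{x}-\mathbf{t}\|_{1}}$. A legal move replaces $k$ consecutive checkers along a coordinate axis by one checker at the next cell; along the axis of motion the weight balance is exactly the defining identity $\phi_{k}^{k}=\phi_{k}^{k-1}+\cdots+\phi_{k}+1$ from Lemma~\ref{knacciconstant}, while in the other coordinates the $\ell_{1}$-distance is unchanged, so the total weight never increases. Summing the weights of the initial configuration factorises into a half-line geometric series in the vertical direction and $d-1$ two-sided geometric series in the lateral directions, giving initial total weight
\begin{equation*}
I \ = \ m\,\phi_{k}^{1-n}\cdot\frac{(\phi_{k}+1)^{d-1}}{(\phi_{k}-1)^{d}}.
\end{equation*}
Since any finite play is confined to a bounded region, infinitely many initial checkers remain untouched and contribute positive weight at the end, so together with the weight $1$ of a checker at $\mathbf{t}$ the non-increasing property forces $I>1$ strictly. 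Writing $L$ for the sum of logs in~(\ref{firstresult}), this rearranges to $n-1<L$, giving $n_{M}\leq \floorof{L}+1$.

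For the lower bound I would give an inductive construction in both $d$ and $m$, building on the $m=1,\ k=2$ construction of~\cite{ddimpaper} that reaches row $3d-2$. To pass from $k=2$ to arbitrary $k$, each 2-step jump is replaced by a $k$-step jump and the horizontal slabs of the construction are widened by the factor $k-1$ needed to supply the extra ``over'' checkers; the $k$-nacci recurrence then makes the bookkeeping close, contributing $\logPhi{\frac{\phi_{k}+1}{\phi_{k}-1}}$ per extra dimension as predicted by the pagoda computation. To pass from $m=1$ to general $m$, I would pre-process the initial board by a local ``compression'' routine that uses the extra checkers on a narrow column to place one effective checker roughly $\floorof{\logPhi{m}}$ rows above the original initial line, after which the $m=1$ construction applies. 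Combining these two generalisations yields a reachable row of at least $\floorof{L}$, matching the lower bound of~(\ref{firstresult}).

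The specialisation~(\ref{1point1}) is then a simplification: substituting $\phi_{2}=\varphi$ and using $\varphi+1=\varphi^{2}$ and $\varphi-1=\varphi^{-1}$ gives $(\varphi+1)^{d-1}/(\varphi-1)^{d}=\varphi^{3d-2}$, whose $\log_{\varphi}$ is the integer $3d-2$, so the floor distributes to yield~(\ref{1point1}). For the ``almost all'' claim, observe that the upper bound $\floorof{L}+1$ fails to be attained only when the strict pagoda inequality $n-1<L$ leaves no integer room, i.e.\ when $L$ itself is an integer. Because $\phi_{k}$ is an algebraic irrational of degree $k$, the equation $L\in\mathbb{Z}$ pins $m$ down to a discrete algebraic set, cutting out at most finitely many values in $\mathbb{N}$. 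For all other $m$ I would push the compression one additional step, converting the strictly positive gap $L-\floorof{L}$ in the pagoda bound into one additional upwards jump of the construction, thereby reaching $\floorof{L}+1$. The pagoda upper bound and the specialisation are routine adaptations of Conway's original argument; the main obstacle is the lower bound construction, namely verifying that the two independent generalisations (slab widening for $k$-jumps and compression for the $m$-boost) can be executed simultaneously without interference and that the strictly positive slack for generic $m$ translates cleanly into the single additional upwards jump required for the ``almost all'' claim.
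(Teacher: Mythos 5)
Your upper bound is the paper's own argument: the same pagoda function $\phi_k^{-\|\mathbf{x}-\mathbf{t}\|_1}$, the same factorisation of the initial weight into one half-line and $d-1$ two-sided geometric series giving $m\phi_k^{1-n}(\phi_k+1)^{d-1}/(\phi_k-1)^d$, and the same specialisation at $k=2$ via $\varphi+1=\varphi^2$, $\varphi-1=1/\varphi$. The lower bound, however, is where the substance of the paper lies, and your sketch does not supply it. The paper does not induct on the $m=1$, $k=2$ construction of \cite{ddimpaper}; instead it (i) solves the one-dimensional $(m,k)$-game completely by ``$k$-nacci jumping'' --- one needs $S_{k-i-1}(n)$ checkers in rows $0,\dots,-(k-1)$, and a row-by-row deficit count shows the required additions $F_k(n+i+k-1)-\bigl(\sum_{j=0}^{i}F_k(k-1+j)\bigr)m$ are eventually non-positive precisely when $\phi_k^{n-1}\le m/(\phi_k-1)$; (ii) shows that in one dimension one can push $\floorof{\frac{m}{\phi_{k}-1}}$ checkers onto row 1; and (iii) projects $\ZZ^d\to\ZZ^{d-1}\to\cdots\to\ZZ$, so that each cell of the lower-dimensional board accumulates $m+2\floorof{\frac{m}{\phi_{k}-1}}=\frac{\phi_k+1}{\phi_k-1}m-\varepsilon$ checkers. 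Your ``slab widening for $k$-jumps'' and ``compression for the $m$-boost'' are placeholders for exactly this work: nothing in your proposal produces the factor $\bigl(\frac{\phi_k+1}{\phi_k-1}\bigr)^{d-1}$ in the reachable height or controls the rounding error accumulated over $d-1$ projections, and that error control is the entire difficulty (it is why the theorem excludes $m=1$).

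The ``almost all'' argument is also incorrect as stated. Whether $\floorof{L}+1$ is attained is a property of the construction, not of the pagoda inequality, so the exceptional set is not $\set{m:L\in\ZZ}$ and is not finite. The paper's construction reaches row $\floorof{L-\mathcal{E}(m)}+1$, where $\mathcal{E}(m)>0$ is the logarithmic effect of the accumulated floor-function errors; it falls short of $\floorof{L}+1$ exactly when the fractional part of $L$ is smaller than $\mathcal{E}(m)$. Since $\mathcal{E}(m)\to 0$ this is a density-zero set of $m$, but for $k=d=2$ the paper records failures at every even-indexed Lucas number from $L(2)$ through $L(30)$ (integers lying just above powers of $\varphi$), so your claim that any strictly positive gap $L-\floorof{L}$ can be converted into one more jump is precisely what fails there. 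Even the unconditional lower bound $\floorof{L}$ requires the quantitative estimate $\mathcal{E}(m)<1$ for $m>1$, which the paper proves by bounding $C(d)$ explicitly; your proposal has no analogue of this step.
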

The result \eqref{1point1} follows for $m>1$ from \eqref{firstresult} and for $m=1$ from results in \cite{ddimpaper}. For details on why we fail to prove \eqref{firstresult} in the $m=1$ case see Remark \ref{remark4}.

The bound in \eqref{firstresult} is logarithmic in $m$, and linear in $d$. Matching with intuition, this tells us that increasing the dimension gives much more power to move up the board than increasing the number of checkers. By Remark \ref{knaccitoinf}, these bounds behave like $\log_{2}(m) + (d-1)\log_2(3)$ as $k$ grows.

Since the game in $d$-dimensions is very complicated, it is difficult to directly prove strong results about it. Instead we first prove results in the 1-dimensional case concerning how high a row can be reached, as well as how many checkers can be moved onto the first row. We then use these results to obtain bounds in the $d$-dimensional case.

We also use these results in 1-dimension to prove results about the maximum amount of checkers that can be moved onto a single square in higher dimensions, and show this asymptotically matches an upper bound for large $m$.

\section{Definitions and Preliminaries}

The original rules of Conway Checkers are as follows.

\begin{itemize}
    \item On an infinite checkerboard, start by placing a checker on each square below a certain arbitrary line. We call the top row of checkers row 0.
    \item On each move, a checker jumps over an orthogonally adjacent checker and lands on the other side of it. The checker it jumped over is then removed.
\end{itemize}

\noindent When generalising to higher dimensions $d$, we consider the game as being played on $\ZZ^d$, with each point being a cell of the board.

\begin{definition} \label{dfunc}
    Given a point $T \in \ZZ^d$, define the distance function $d \, (\cdot, T): \ZZ^d \to \ZZ_{\geq 0}$ by
    \begin{equation}
        d(x, T) \ = \ \sum_{i=1}^{d} |x_i-T_i|,
    \end{equation}
    the taxicab distance between $x$ and $T$.
\end{definition}

\begin{definition}
    Given a function $f: \ZZ^d \to \mathbb{R}$, the \textit{energy} of a boardstate $B$ with respect to $f$ is the sum of the function applied to each checker's square. For example, if $f = 1$ then the energy of the boardstate is just the number of checkers on the board.
\end{definition}

\begin{definition}
    A function $p: \ZZ^d \to \mathbb{R}$ is called a \textit{pagoda function} if the energy of the board with respect to $p$ is non-increasing for any possible move.
\end{definition}

\begin{theorem}
We have the following two conditions on a boardstate being unattainable in finite time (where all energies are with respect to an arbitrary pagoda function).

\begin{enumerate}[(i)] \label{fundamentaltheoremofconwaycheckers}
    \item For a target boardstate $B$, if the energy of $B$ is greater than the initial energy of the board, then that boardstate is not attainable.
    
    \item If there are infinitely many checkers that are present in the initial boardstate and not present in the target state, then it is not possible to reach $B$ in finitely many moves.
\end{enumerate}

\end{theorem}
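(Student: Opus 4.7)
Both parts follow quickly from the definitions; I would handle them separately with short arguments.

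For (i), I plan to induct on the number of moves. The base case of zero moves is trivial, since the current energy equals the initial energy. By the defining property of a pagoda function, any single move does not increase the energy, so after $n+1$ moves the energy is at most the energy after $n$ moves, which by the induction hypothesis is at most the initial energy. Any boardstate attained in finitely many moves therefore has energy at most the initial energy; if $B$ has strictly greater energy, it cannot be reached.

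For (ii), the key observation is that a single move alters exactly three cells: the jumper's starting cell loses one checker, the jumped-over cell loses one checker, and the landing cell gains one checker. I would track the cellwise deficit $D = \sum_{p \in \ZZ^d} \max(0,\, n_{\mathrm{init}}(p) - n_{\mathrm{current}}(p))$, where $n_{\mathrm{init}}$ and $n_{\mathrm{current}}$ count checkers at each cell of $\ZZ^d$. Initially $D=0$, and each move can increase $D$ by at most $2$: the two cells losing a checker each contribute at most $+1$ to $D$, while the landing cell only decreases $D$. Hence after $n$ moves, $D \leq 2n$. The hypothesis that infinitely many initial checkers are absent from the target is precisely the statement that the target has $D = \infty$, which no finite $n$ can produce.

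I do not expect a serious obstacle. The only point where I would take some care is in phrasing (ii): since both the initial and target states may contain infinitely many checkers, one cannot naively compare cardinalities; the cellwise-deficit sum $D$ is the right invariant because it captures exactly the information in the hypothesis while remaining finite after finitely many moves.
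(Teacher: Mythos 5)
Your proposal is correct and follows essentially the same route as the paper: part (i) is the monotonicity of energy under the pagoda function, and part (ii) is the observation that finitely many moves can only disturb finitely many checkers. Your cellwise-deficit sum $D$ is a more careful formalization of the paper's one-line argument for (ii) (and it adapts immediately to the later $(m,k)$-game, where a move alters $k+1$ cells rather than three, giving $D \leq kn$), but the underlying idea is identical.
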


\begin{proof}

To prove $(i)$ we note that the board's energy is non-increasing for all possible moves, hence no sequence of moves can lead to a boardstate with greater energy. To show $(ii)$ we note that if there are infinitely many such checkers, then infinitely many checkers need to be moved. Since the rules only permit the moving of one checker at a time, it is not possible to reach $B_0$ in finite time.
\end{proof}
\begin{remark}
    Note that these conditions give no indication as to whether a state is attainable, only that it is not possible to reach.
\end{remark}

\begin{definition} \label{knaccidef}
    We define the $k$-nacci sequence $F_k(n)$ by the recurrence
    \begin{equation}
    a_{n+k}  \ = \  a_{n} + a_{n+1} + \cdots + a_{n+k-1}
    \end{equation}
    and initial conditions
    \begin{equation}
        F_k(0)  \ = \  F_k(1)  \ = \  \cdots  \ = \  F_k(k-2)  \ = \  0, \; F_k(k-1)  \ = \  1.
    \end{equation}
\end{definition}

\begin{lemma} \label{knacciconstant}
    The characteristic polynomial of the $k$-nacci sequence,
    \begin{equation}
        1+x+\cdots+x^{k-1}-x^k
    \end{equation}
    has only one root $\phi_{k}$ with $|\phi_{k}| \geq 1$. In particular, this root is always real and lies in the interval $(1,2)$. We call this the $k$-nacci constant. In the case $k=2$, we denote this by $\varphi$, the golden ratio, equal to $\frac{1+\sqrt{5}}{2}$. We also have that all the roots of this polynomial are distinct.
\end{lemma}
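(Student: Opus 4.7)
The plan is threefold: first, locate the unique positive real root $\phi_k$ by monotonicity; second, use two applications of the triangle inequality to rule out any other root of modulus at least one; third, verify distinctness via a derivative argument on a related polynomial. Throughout I will work with $p(x) = x^k - x^{k-1} - \cdots - x - 1$, which differs from the polynomial in the statement only by a sign and so has the same roots.

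For the first step, for $x > 0$ I divide the equation $p(x) = 0$ by $x^k$ to rewrite it as $h(x) := x^{-1} + x^{-2} + \cdots + x^{-k} = 1$. The function $h$ is continuous and strictly decreasing on $(0,\infty)$, with $h(1) = k \geq 2$ and $h(2) = 1 - 2^{-k} < 1$, so there is a unique positive real root $\phi_k \in (1,2)$.

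For the second step, suppose $z$ is any root with $|z| \geq 1$. Taking moduli in $z^k = 1 + z + \cdots + z^{k-1}$ and applying the triangle inequality yields $p(|z|) \leq 0$, which forces $|z| \leq \phi_k$. Multiplying $p(z) = 0$ by $z - 1$ gives the identity $z^{k+1} + 1 = 2 z^k$; taking moduli yields $g(|z|) \geq 0$, where $g(x) := x^{k+1} - 2 x^k + 1 = (x-1)p(x)$. A short analysis of $g'(x) = x^{k-1}\bigl((k+1)x - 2k\bigr)$ shows that $g$ has a unique positive critical point at $x = 2k/(k+1) \in (1,\phi_k)$, at which $g < 0$, so $g$ is negative exactly on $(1,\phi_k)$; hence $|z| \leq 1$ or $|z| \geq \phi_k$. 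Combined with $|z| \leq \phi_k$, this leaves three cases: $|z| < 1$, $|z| = 1$, or $|z| = \phi_k$. If $|z| = \phi_k$, equality in the first triangle inequality forces $1, z, \ldots, z^{k-1}$ to share a common argument, so $z$ is a positive real and $z = \phi_k$. If $|z| = 1$, equality in the second forces $z^{k+1}$ to be a positive real; combined with $|z^{k+1}| = 1$ this gives $z^{k+1} = 1$, then $z^k = (z^{k+1}+1)/2 = 1$, so $z = 1$, contradicting $p(1) = 1 - k \neq 0$.

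For the third step, let $q(x) = (x-1)p(x) = x^{k+1} - 2x^k + 1$. Since $p(1) \neq 0$, the roots of $p$ are exactly the non-one roots of $q$, so it suffices to show $q$ has only simple roots. Its derivative $q'(x) = x^{k-1}\bigl((k+1)x - 2k\bigr)$ vanishes only at $x = 0$ and $x = 2k/(k+1)$; since $q(0) = 1$ and since the critical-point analysis above gives $q(2k/(k+1)) < 0$, no root of $q'$ is a root of $q$, so all roots of $q$ are simple. The main obstacle is the equality-case analysis in step two: a single application of the triangle inequality only yields $|z| \leq \phi_k$, and the auxiliary bound coming from multiplication by $z - 1$ is essential for closing the annulus $1 < |z| < \phi_k$ before the equality analysis can pin $z$ down.
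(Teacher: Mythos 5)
Your proof is correct, but it takes a genuinely different route from the paper, which simply invokes Rouch\'{e}'s theorem and defers to a reference for the details. You instead give a self-contained elementary argument: the monotonicity of $h(x)=x^{-1}+\cdots+x^{-k}$ cleanly isolates $\phi_k\in(1,2)$; the first triangle inequality gives $|z|\le\phi_k$ for any root of modulus at least one; and the key extra ingredient — multiplying by $z-1$ to pass to $q(x)=x^{k+1}-2x^k+1$ and running a second triangle inequality plus the sign analysis of $q$ on $(1,\phi_k)$ — closes the annulus that a single estimate leaves open. Your equality-case analyses (forcing $z$ positive real when $|z|=\phi_k$, and deriving $z=1$, contradicting $p(1)=1-k\ne 0$, when $|z|=1$) are both sound, and the simplicity of the roots follows correctly since $q'(x)=x^{k-1}\bigl((k+1)x-2k\bigr)$ vanishes only at $0$ and $2k/(k+1)$, where $q$ is nonzero. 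What Rouch\'{e} buys the paper is brevity and the count of roots inside the unit disc in one stroke; what your argument buys is full elementarity, an explicit location of $\phi_k$ in $(1,2)$, and a proof of distinctness of all roots, which the paper asserts but leaves to the citation. One cosmetic point: after assuming $|z|\ge 1$ the case $|z|<1$ is vacuous, so you really have only the two cases $|z|=1$ and $|z|=\phi_k$ to dispose of.
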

\begin{proof}
    This follows by Rouch\'{e}'s theorem. For a proof see, for example, \cite{unitcircle}.
\end{proof}

\begin{remark} \label{knaccitoinf}
    We also have that $\phi_k \to 2$ as $k \to \infty$. This is shown in \cite{unitcircle2}, which also gives an alternative proof to Lemma \ref{knacciconstant}.
\end{remark}

\begin{corollary} \label{asymptoticfk}
    We have $F_k(n) \sim c\phi_{k}^n$, where $c$ is real and positive.
\end{corollary}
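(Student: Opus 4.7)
The plan is to invoke the standard theory of linear recurrences with distinct characteristic roots, isolate the dominant term, and argue from positivity of the sequence that the leading coefficient is real and strictly positive.

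First I would appeal to Lemma \ref{knacciconstant}: the characteristic polynomial $1+x+\cdots+x^{k-1}-x^k$ has $k$ distinct roots $\phi_k=\lambda_1,\lambda_2,\ldots,\lambda_k$, with $|\lambda_i|<1$ for $i\geq 2$. Since the roots are distinct, the general solution space of the recurrence has dimension $k$ and is spanned by $\{\lambda_i^n\}_{i=1}^{k}$, so there exist (a priori complex) constants $c_1,\ldots,c_k$ determined by the $k$ initial conditions such that
\begin{equation}
F_k(n) \ = \ c_1 \phi_k^n + \sum_{i=2}^{k} c_i \lambda_i^n.
\end{equation}
Dividing through by $\phi_k^n$ and noting that $|\lambda_i/\phi_k|<1$ for $i\geq 2$, the tail sum tends to $0$, so $F_k(n)/\phi_k^n \to c_1$; equivalently $F_k(n) \sim c_1 \phi_k^n$ provided $c_1\neq 0$.

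Next I would verify that $c_1$ is real. The non-real roots $\lambda_i$ come in complex-conjugate pairs (since the characteristic polynomial has real coefficients), and the corresponding coefficients must also be conjugate pairs in order for every $F_k(n)$ to be real. Hence $\sum_{i=2}^{k} c_i \lambda_i^n$ is real for all $n$, and since $\phi_k^n$ and $F_k(n)$ are real, so is $c_1$.

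Finally I would show $c_1>0$. Since $F_k(n)$ is a strictly increasing sequence of positive integers for $n\geq k-1$ (straightforward from the recurrence and initial conditions), we have $F_k(n) \to \infty$. If $c_1$ were zero, the formula would force $F_k(n) = \sum_{i=2}^k c_i \lambda_i^n \to 0$, a contradiction; if $c_1<0$, then $F_k(n)/\phi_k^n$ would tend to a negative limit while being positive, again a contradiction. Therefore $c_1>0$, completing the proof. The only mildly delicate step is the reality argument, but it reduces immediately to the observation that the characteristic polynomial has real coefficients and the initial data are real, so no genuine obstacle arises.
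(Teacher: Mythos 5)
Your proof is correct and follows essentially the same route as the paper: decompose $F_k(n)$ over the distinct characteristic roots, observe the subdominant terms vanish relative to $\phi_k^n$, and deduce reality and positivity of the leading coefficient from the positivity and divergence of the sequence. If anything you are slightly more careful than the paper, which tacitly assumes $c\neq 0$ before writing $\lim_n F_k(n)/(c\phi_k^n)=1$, whereas you explicitly rule out $c_1=0$ and $c_1<0$.
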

\begin{proof}
    Since the roots of the characteristic polynomial are all distinct, the solution to the linear recurrence can be written as a sum of powers of the roots of its characteristic equation. We then have
    \begin{equation}
        F_k(n)  \ = \  c\phi_{k}^n + A_1\rho_1^n + \cdots + A_{k-1}\rho_{k-1}^n
    \end{equation}
    for a collection of $\rho_i$ that lie inside the unit circle. It then clearly follows that
    \begin{equation}
        \lim_{n\to \infty}\frac{F_k(n)}{c\phi_{k}^n}  \ = \  1,
    \end{equation}
    so $F_k(n) \sim c\phi_{k}^n$ for some $c$. From the initial conditions in Definition \ref{knaccidef}, we have that $F_k(n)$ is real and non-negative for all $n \in \mathbb{N}$ and $F_k(n) \to \infty$ as $n \to \infty$, so $c$ must be real and positive.
\end{proof}

\begin{lemma} \label{knacciplusone}
    The solution to the recurrence
    \begin{equation}
        a_{n+k}  \ = \  a_{n} + \cdots + a_{n+k-1} + 1
    \end{equation}
    with initial conditions
    \begin{equation}
        (a_0,a_1,a_2,\dots,a_{k-1})  \ = \  (1,2,4, \dots, 2^{k-1})
    \end{equation}
    is 
    \begin{equation}
        a_i  \ = \  \sum_{j=0}^{i} F_k(k-1+j).
    \end{equation}
\end{lemma}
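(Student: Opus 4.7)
The plan is to prove this by induction on $i$. The cleanest route is to verify the initial conditions directly, then instead of matching the full recurrence in the induction step, look at the difference $a_{n+k}-a_{n+k-1}$, which collapses nicely.

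For the base cases $i = 0, 1, \dots, k-1$, I would first establish that $F_k(k-1) = 1$ and $F_k(k+j) = 2^j$ for $0 \leq j \leq k-2$. Both follow immediately from Definition \ref{knaccidef}: only $F_k(k-1) = 1$ is nonzero among the starting values, so applying the recurrence pushes a $1$ at position $k$, and subsequent values double while the rest of the window is $0$. Summing then gives
\begin{equation*}
\sum_{j=0}^{i} F_k(k-1+j) \ = \ 1 + \bigl(1 + 2 + \cdots + 2^{i-1}\bigr) \ = \ 2^i
\end{equation*}
for $1 \leq i \leq k-1$, matching the stated initial values $a_i = 2^i$. The case $i=0$ is immediate.

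For the inductive step, I would compute the first difference instead of re-deriving the full $k$-term sum. For $n \geq 1$ the recurrence gives
\begin{equation*}
a_{n+k} - a_{n+k-1} \ = \ (a_n + \cdots + a_{n+k-1} + 1) - (a_{n-1} + \cdots + a_{n+k-2} + 1) \ = \ a_{n+k-1} - a_{n-1}.
\end{equation*}
Assuming the claimed formula for all earlier indices, the right side telescopes to $\sum_{j=n}^{n+k-1} F_k(k-1+j)$, which by the defining recurrence of Definition \ref{knaccidef} equals $F_k(n+2k-1) = F_k(k-1+(n+k))$. Hence $a_{n+k} = a_{n+k-1} + F_k(k-1+(n+k))$, and combining with the inductive expression for $a_{n+k-1}$ yields exactly $\sum_{j=0}^{n+k} F_k(k-1+j)$.

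The one remaining gap is the case $i = k$, because the difference identity above needed $n \geq 1$, meaning the induction transitions from $i=k$ onwards. Handle this directly: by the recurrence and the initial values,
\begin{equation*}
a_k \ = \ 1 + 2 + \cdots + 2^{k-1} + 1 \ = \ 2^k,
\end{equation*}
and on the other side $\sum_{j=0}^{k} F_k(k-1+j) = 2^{k-1} + F_k(2k-1) = 2^{k-1} + 2^{k-1} = 2^k$, using that $F_k(2k-1) = 2^{k-1}$ (obtained from the recurrence as $1 + 1 + 2 + \cdots + 2^{k-2} = 2^{k-1}$). The main obstacle is purely bookkeeping: keeping the index shifts between $a_i$ and $F_k(k-1+j)$ straight, and spotting that the correct identity to exploit is the first difference rather than the full $k$-term sum.
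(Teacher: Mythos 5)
Your proof is correct and is essentially the paper's own argument carried out in full: the paper simply says the result "follows from verifying the given solution and using uniqueness," and your induction (base cases plus the first-difference telescoping $a_{n+k}-a_{n+k-1}=F_k(k-1+(n+k))$) is exactly that verification made explicit. The index bookkeeping, including the separate $i=k$ case, checks out.
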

\begin{proof}
    This follows from verifying the given solution and using uniqueness.
\end{proof}

\begin{definition}
    We define the value $n_M$ to be the maximum row attainable,
    \begin{equation}
        n_M \ := \  \mathrm{max}\left( 
\set{n: \text{\normalfont row $n$ is attainable in finitely many moves}} \right).
    \end{equation}
\end{definition}

\section{Conway's Solution}

Since we intend to use similar ideas later, we show, by methods of Conway illustrated in \cite{winningways}, that it is impossible to reach row 5 in finitely many moves.\bigskip

On a board $\ZZ^2$, fix a target square $T$ above the starting line. We then weight each square of the board by $\alpha^d$ for some $\alpha \in (0,1)$ that we fix later, where $d$ is the square's taxicab distance from $T$ as in Definition \ref{dfunc} (so $T$ has weight 1).

\begin{table}[ht]
    \centering
    \begin{tabular}{c|c|c|c|c|c|c|c|c}
    \hline
        $\cdots$ & $\alpha^3$ & $\alpha^2$ & $\alpha^1$ & $\alpha^0$ & $\alpha^1$ & $\alpha^2$ & $\alpha^3$ & $\cdots$ \\
    \hline
        $\cdots$ & $\alpha^4$ & $\alpha^3$ & $\alpha^2$ & $\alpha^1$ & $\alpha^2$ & $\alpha^3$ & $\alpha^4$ & $\cdots$ \\
    \hline
        $\cdots$ & $\alpha^5$ & $\alpha^4$ & $\alpha^3$ & $\alpha^2$ & $\alpha^3$ & $\alpha^4$ & $\alpha^5$ & $\cdots$ \\
    \hline
        $\vdots$ & $\vdots$ & $\vdots$ & $\vdots$ & $\vdots$ & $\vdots$ & $\vdots$ & $\vdots$ & $\vdots$ \\
    \hline
        $\cdots$ & $\alpha^{n+2}$ & $\alpha^{n+1}$ & $\alpha^n$ & $\alpha^{n-1}$ & $\alpha^n$ & $\alpha^{n+1}$ & $\alpha^{n+2}$ & $\cdots$ \\
    \hline\hline
        $\cdots$ & $\alpha^{n+3}$ & $\alpha^{n+2}$ & $\alpha^{n+1}$ & $\alpha^{n}$ & $\alpha^{n+1}$ & $\alpha^{n+2}$ & $\alpha^{n+3}$ & $\cdots$ \\
    \hline
        $\cdots$ & $\alpha^{n+4}$ & $\alpha^{n+3}$ & $\alpha^{n+2}$ & $\alpha^{n+1}$ & $\alpha^{n+2}$ & $\alpha^{n+3}$ & $\alpha^{n+4}$ & $\cdots$ \\
    \hline
        $\vdots$ & $\vdots$ & $\vdots$ & $\vdots$ & $\vdots$ & $\vdots$ & $\vdots$ & $\vdots$ & $\vdots$ \\
    \end{tabular}
    \caption{Weighting of the board, with row 0 below the double line.}
    \label{tab:boardweight}
\end{table}

The initial boardstate contains a checker on each square below some line, denoted with the double bar in Table \ref{tab:boardweight}. We call this row of checkers row 0. Hence we can count the total initial weight of the board by summing the weights of all squares with a checker. This gives

\begin{equation}
    E_0  \ = \  \frac{\alpha^n(1+\alpha)}{(1-\alpha)^2}.
\end{equation}

We want the energy of the board to be non-increasing under all possible moves. To do this, we consider a move towards the target square. This has change in energy 
\begin{equation}
\Delta E \  =  \ \alpha^k(1 - \alpha - \alpha^2).
\end{equation}
It makes sense to set this to equal zero since moving toward the target is intuitively the `best' possible move. We pick the value of $\alpha$ with $|\alpha| < 1$ so that the initial energy is finite, giving $\alpha = \frac{\sqrt{5}-1}{2} = 1/\varphi$ and $E_0 = \alpha^{n-5}$. Considering the change in energy under all other possible moves, both horizontal and vertical, it is easy to show that with this value of $\alpha$ the energy must always be non-increasing.\bigskip

Conway's result, that row 5 is not attainable in finitely many moves, is then an immediate consequence of Theorem \ref{fundamentaltheoremofconwaycheckers}. To see this, set $n=5$. Then we have $E_0 =1$, but the energy of just the target square $T$ is also 1. Hence $B$ must just be the square $T$, else $(i)$ is violated. Then row 5 is not attainable in finitely many moves by $(ii)$.

It is possible to explicitly find a sequence of moves that will allow row 4 to be reached, and in fact this only requires a minimum of 20 checkers. A possible starting position with 20 checkers can be found in \cite{winningways}.\bigskip

\section{Generalisations}

We now generalise the game. We make the following alterations.

\begin{itemize}
    \item Each square can now have any number of checkers occupying it.
    \item Each square that would usually start with a checker starts with $m$ checkers.
    \item Checkers now jump over $k-1$ pieces instead of just one, removing a checker from each square they jump over.
\end{itemize}

When we consider this variant on the board $\ZZ^d$, we call this the Conway $(m,k,d)$-game. We define moves as jumping over $k-1$ checkers instead of $k$ for notational convenience. Note that earlier the condition that each checker can only jump into an empty square was not in fact relevant for the solution, the game behaved the same with or without this condition.

\begin{table}[ht]
    \centering
    \begin{tabular}{|c|c|c|c|}
    \hline
        $1$ & $1$ & $1$ & $0$ \\
    \hline
    \end{tabular}
    \label{tab:movept1}
\end{table}

\begin{figure}[ht]
    \centering
    \begin{tabular}{|c|c|c|c|}
    \hline
        $0\tikzmark{e}$ & $0$ & $0$ & \tikzmark{f}$1$ \\
    \hline
    \end{tabular}
    \label{tab:movept2}

    \begin{tikzpicture}[overlay, remember picture, shorten >=.5pt, shorten <=.5pt, transform canvas={yshift=.9\baselineskip}]
    \draw [->] ({pic cs:e}) [bend left] to ({pic cs:f});
    \end{tikzpicture}
    \caption{A move, with $k=3$.}
\end{figure}
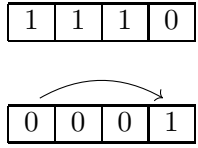

Note that the Conway $(1,2,2)$-game is just the standard game as defined earlier.\bigskip

To analyse this game, we define a new pagoda function. We construct this analogously to earlier, setting $p(x) = \alpha^{d(x,T)}$ and $\alpha$ to be the appropriate solution to

\begin{equation}
    1-\alpha-\cdots-\alpha^{k-1}-\alpha^k  \ = \  0.
\end{equation}
We choose $\alpha=1/\phi_{k}$, where $\phi_{k}$ is the $k$-nacci constant, as in Definition \ref{knacciconstant}. This ensures that the initial boardstate has finite energy.

\subsection{\texorpdfstring{k}{Lg}-nacci Jumping}

In the hopes of finding a lower bound on the height that can be attained in the Conway $(m,k,d)$-game, we first show a construction that allow us to reach row $n$. We define 

\begin{equation} \label{sidef}
S_i(n) \ := \ \sum_{j=0}^{i}F_k(n+k-2-j),
\end{equation}
recalling that $F_k$ denotes the $k$-nacci sequence with initial conditions as in Definition \ref{knaccidef}.

\begin{lemma} \label{nnaccijumping}
    Suppose that in one column, in row $-i$, for $i = 0,\dots, k-1$, we have $S_{k-i-1}(n)$ checkers. Then it is possible to reach the $n^{\mathrm{th}}$ row.
\end{lemma}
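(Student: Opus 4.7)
The plan is to argue by strong induction on $n$. For the base case $n = 1$, I would verify directly from \eqref{sidef} that $S_{k-i-1}(1) = F_k(k-1) + F_k(k-2) + \cdots + F_k(i) = 1$ for every $i \in \{0, 1, \ldots, k-1\}$, so the assumed configuration is exactly one checker in each of rows $0, -1, \ldots, -(k-1)$; a single move then sends the checker at row $-(k-1)$ over the other $k-1$ checkers to land on row $1$.

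For the inductive step, I would assume the result for every positive integer less than $n$ and partition the initial pile of checkers in row $-i$ into $k$ disjoint sub-piles of sizes $S_{k-i-1}(n-1), \ldots, S_{k-i-1}(n-k)$. Applying the inductive hypothesis to each sub-pile separately, the $j$-th sub-pile can be used on its own to deposit one checker at row $n-j$. Carrying out these $k$ sub-games one after another in order of decreasing target row, the board ends up with exactly one checker in each of rows $n-1, n-2, \ldots, n-k$, and a final $k$-nacci jump launched from row $n-k$ over the $k-1$ higher rows lands a checker at row $n$.

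The partition is consistent with the assumed initial configuration precisely because of the identity
\begin{equation}
S_{k-i-1}(n) \ = \ \sum_{j=1}^{k} S_{k-i-1}(n-j),
\end{equation}
which I would obtain by swapping the order of summation in \eqref{sidef} and then applying the $k$-nacci recurrence of Definition \ref{knaccidef} to the inner sum of $k$ consecutive values of $F_k$. When some target $n-j$ is non-positive (which only happens for small $n$), the corresponding sub-game is trivial, amounting to reserving one of the initial checkers at the appropriate low row; the identity still holds under the standard backward extension of $F_k$ via the same recurrence, which is how the counting stays uniform across the induction.

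The main obstacle I expect is arguing that the $k$ sub-games can genuinely be run independently within a single shared column. Because the $(m,k)$-game allows multiple checkers per cell, the sub-piles in each row $-i$ can be labelled at the outset and treated as distinct games; executing them in order of decreasing target row ensures that each sub-game's moves stay at or below its own target row, so the checkers already deposited by earlier sub-games sit at strictly higher rows and cannot interact with any later move. Once this non-interference is made precise, the inductive step closes and the lemma follows.
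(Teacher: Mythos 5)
Your argument is correct in substance but organised quite differently from the paper's. The paper gives a single iterative construction: all $F_k(n+k-2)$ checkers in row $-(k-1)$ jump over the $k-1$ rows above into row $1$, and the forward recurrence $F_k(n+k-2)=F_k(n+k-3)+\cdots+F_k(n-2)$ shows that the resulting configuration is exactly the hypothesis of the lemma with $n$ replaced by $n-1$, shifted up one row; iterating reduces everything to the trivial configuration of one checker per row. You instead run a strong induction with a divide-and-conquer decomposition, splitting each pile via $S_{\ell}(n)=\sum_{j=1}^{k}S_{\ell}(n-j)$ into $k$ sub-games targeting rows $n-1,\dots,n-k$ and finishing with one jump. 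Both routes work; your identity does follow from swapping the order of summation in \eqref{sidef} and applying the recurrence, and the non-interference issue you worry about is actually a non-issue, since cells hold arbitrarily many checkers and landing on an occupied cell is allowed, so disjointly labelled sub-piles never obstruct one another. The one place your write-up needs tightening is the range $2\le n\le k$: there your identity invokes the recurrence at indices below $k$, hence the backward extension of $F_k$, whose values at negative indices are not all zero (e.g.\ $F_k(-1)=1$, $F_k(-2)=-1$). It is true that $F_k(-s)=0$ for $3\le s\le k-1$, which is exactly what makes the sub-pile for a non-positive target $-i'$ collapse to a single checker already sitting at row $-i'$, but this must be verified rather than asserted --- or, more simply, treat $n=1,\dots,k$ as explicit base cases so that the inductive step only ever uses the recurrence at indices at least $k$. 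The paper's route avoids this bookkeeping entirely, which is its main advantage; yours has the merit of making the self-similar structure of the $S_i$ explicit.
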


\begin{proof}
In Table \ref{tab:nnaccijumping} below, the first column gives the initial state of the board after expanding each $S_{k-i-1}$. The second column gives the state after jumping all checkers from the lowest square over the rest.

\begin{table}[ht]
    \centering
    \begin{tabular}{c||c}
        move 0 & move 1 \\
    \hline
        0 & 0 \\
    \hline
        0 & $F_k(n+k-2)$ \\
    \hline\hline
        $F_k(n+k-2) + \dots + F_k(n-1)$ & $F_k(n+k-3) + \dots + F_k(n-1)$ \\
    \hline
        $F_k(n+k-2) + \dots + F_k(n)$ & $F_k(n+k-3) + \dots + F_k(n)$ \\
    \hline
        \vdots & \vdots \\
    \hline
        $F_k(n+k-2)+F_k(n+k-3)$ & $F_k(n+k-3)$ \\
    \hline
        $F_k(n+k-2)$ & 0 \\
    \hline
    \end{tabular}
    \caption{First move.}
    \label{tab:nnaccijumping}
\end{table}

Then, since $F_k(n+k-2) = F_k(n+k-3)+\dots + F_k(n-2)$,
we are in the same state as before with $n$ replaced by $n-1$, and one row higher. Repeat this process, and it is possible to reach row $n$.

\end{proof}

\subsection{Solution in \texorpdfstring{$\ZZ$}{Lg}: \texorpdfstring{$k=2$}{Lg}}

When the game is played on $\ZZ$, it is intuitively clear that the only good moves are moves up the board. We show that this intuition is correct, by finding an upper bound and proving that the player can always reach this bound using only upward moves, first in the case $k=2$ and then in the more general case for any choice of $k$. This is a very important result, as our solution in $d$-dimensions depends on projecting down into 1-dimension and applying this result.\newline

To motivate the more general proof, we will show that with $k=2$, $d=1$ and $m>1$ arbitrary (so each checker jumps over 1 other) we have that the maximum row that can be reached is exactly $\floorof{\logphi{m}}+2$.\newline

To show this is the upper bound, the standard weighting of the board gives an initial energy of $E_0 = \frac{m\alpha^n}{1-\alpha}$, and we solve $E_0 < 1$ to obtain

\begin{equation}
    n_M \ \leq \ \floorof{\logphi{m}}+2.
\end{equation}

To obtain the lower bound, we will attempt to climb the board using $k$-nacci jumping. In this case, we have $S_0 = F(n)$, $S_1 = F(n)+F(n-1) = F(n+1)$, (writing $F$ instead of $F_2$ for the Fibonacci sequence), and $k$-nacci jumping becomes Fibonacci jumping. The process of reaching row $n$ is illustrated in Table \ref{tab:fibjumping}.

    \begin{table}[ht]
        \centering
        \begin{tabular}{|c||c||c||c||c||c||}
        \hline
            row & move 0 & move 1 & $\cdots$ & move $n-1$ & move $n$ \\
        \hline\hline
            $n$ & 0 & 0 & $\cdots$ & 0 & $F(1)$\\
        \hline
            $n-1$ & 0 & 0 & $\cdots$ & $F(2)$ & $F(0)$\\
        \hline
            $n-2$ & 0 & 0 & $\cdots$ & $F(1)$ & 0\\
        \hline
            $\vdots$ & $\vdots$ & $\vdots$ & $\vdots$ & $\vdots$ & $\vdots$\\
        \hline
            2 & 0 & 0 & $\cdots$ & 0 & 0\\
        \hline
            1 & 0 & $F(n)$ & $\cdots$ & 0 & 0\\
        \hline
        \hline
            0 & $F(n+1)$ & $F(n-1)$ & $\cdots$ & 0 & 0\\
        \hline
            -1 & $F(n)$ & 0 & $\cdots$ & 0 & 0\\
        \end{tabular}
        \caption{Fibonacci jumping.}
        \label{tab:fibjumping}
    \end{table}

Guided by Lemma \ref{nnaccijumping}, we aim to put $F(n+1)$ checkers in row 0 and $F(n)$ in row $-1$ in order to reach the $n$\textsuperscript{th} row. To each row we need to add the following amounts of checkers.

\begin{itemize}
    \item In row 0 we need to add $F(n+1)-m$ checkers.
    \item In row $-1$ we need to add $F(n)-m$, as well as the $F(n+1)-m$ required to be added to row 0. This is a total of $F(n+2)-2m$.
    \item In row $-2$ we need to add enough to jump into row 0, and enough to be jumped over into row $-1$. This is a total of $F(n+3)-3m$, however there are already $m$ checkers in this row, so we need to add $F(n+3)-4m$ checkers.
    \item Continue this down the board, with each square requiring enough checkers to jump into the square two above it, as well as enough checkers to be jumped over into the square directly above, minus the $m$ checkers that each square already contains.
\end{itemize}

\begin{table}[ht]
    \centering
    \begin{tabular}{|c||c|c|c|}
\hline
    row & move 0 & target & amount needed \\
\hline
\hline
    0 & $m$ & $F(n+1)$ & $F(n+1)-m$ \\
\hline
    $-1$ & $m$ & $F(n+2)-m$ & $F(n+2)-2m$ \\
\hline
    $-2$ & $m$ & $F(n+3)-3m$ & $F(n+3)-4m$ \\
\hline
    $-3$ & $m$ & $F(n+4)-6m$ & $F(n+4)-7m$ \\
\hline
    $-4$ & $m$ & $F(n+5)-11m$ & $F(n+5)-12m$ \\
\hline
    $-5$ & $m$ & $F(n+6)-19m$ & $F(n+6)-20m$ \\
\hline
    \vdots & \vdots & \vdots & \vdots\\
\end{tabular}
\caption{Checkers needed to be added in each square.}
\end{table}

Note that beyond row $-1$, the target amount is the sum of the amount needed to be added in the two squares above, since the square needs to jump into one and be jumped over into the other. The amount that needs to be added is then the target value minus $m$, since there are initially $m$ checkers in the square.

We see that in row $-i$, we need to add

\begin{equation}
    F(n+i+1)-a_im
\end{equation}
where $a_{i} = a_{i-1}+a_{i-2}+1$. By Lemma \ref{knacciplusone}, since the initial conditions match, this is

\begin{equation}
    F(n+i+1)-\left(\sum_{j=0}^{i}F(j+1)\right)m  \ = \ F(n+i+1)-(F(i+3)-1)m.
\end{equation}

If this were to be eventually non-positive for consecutive rows, we would have that beyond a certain point, there are enough checkers on the squares to support reaching the desired state, since no more need to be added. We could then go from this point upwards, adding the required amount of checkers to each square, and we would have that row $n$ could be reached. Note this only uses a finite section of the column.\newline

To this end, we apply Corollary \ref{asymptoticfk} to find a condition on this being eventually non-positive. We have
\begin{align}
    \notag &F(n+i+1) - \left( \sum_{j=0}^{i}F(j+1) \right)m \\ \notag
    \sim \ \ &c\varphi^{n+i+1}-cm\left(\varphi+ \cdots + \varphi^{i+1}\right) \\ \notag
    \sim \ \ &c\left(\varphi^{n+i+1}- m \left(\frac{\varphi^{i+2}}{\varphi-1}\right)\right) \\
    = \ \ &c\varphi^{i+1}\left(\varphi^n - \frac{m\varphi}{\varphi-1}\right).
\end{align}

This is eventually non-positive if
\begin{equation}
    \varphi^{n-1} - \frac{m}{\varphi-1} \ < \ 0.
\end{equation}

We solve this to obtain 

\begin{equation} \label{lowerbound1colk2}
n_M \ \geq \ \floorof{\text{log}_{\varphi} (m)-\text{log}_{\varphi}(\varphi-1)} + 1 = \floorof{\logphi{m}}+2.
\end{equation}

Hence we have equality and it is always possible to reach row $\floorof{\logphi{m}}+2$ when $m>1$.

\subsection{Solution in \texorpdfstring{$\ZZ$}{Lg}}

We will now generalise this argument to the case when $k \geq 2$, which we will later use to prove Theorem \ref{mainresult} in full generality.\newline

\noindent We need the following lemma.

\begin{lemma} \label{Sidentity}
With $S_i(n)$ defined as in \eqref{sidef}, we have
\begin{equation}
    \sum_{j=1}^{k} S_{k-j}(n)F_k(k-j+i)  \ = \  F_k(n+i+k-1).
\end{equation}
\end{lemma}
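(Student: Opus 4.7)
The plan is to induct on $i$, reducing the general identity to the base case $i = 0$ via a diagonal shift relation. Write $L_n(i) := \sum_{j=1}^{k} S_{k-j}(n) F_k(k-j+i)$ for the LHS and $R_n(i) := F_k(n+i+k-1)$ for the RHS.

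For the base case $i = 0$, I would observe that $F_k(k-j) = 0$ for $2 \leq j \leq k$ while $F_k(k-1) = 1$, so the LHS collapses to $S_{k-1}(n) = F_k(n-1) + F_k(n) + \cdots + F_k(n+k-2)$. By the defining $k$-nacci recurrence from Definition \ref{knaccidef}, this sum is exactly $F_k(n+k-1) = R_n(0)$.

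For the inductive step, I would first establish two preparatory identities. The shift formula
\[
S_i(n+1) \ = \ F_k(n+k-1) + S_{i-1}(n),
\]
valid for $i \geq 0$ with the convention $S_{-1}(n) := 0$, follows by isolating the $\ell = 0$ term in the sum defining $S_i(n+1)$ and reindexing the rest. The telescoping
\[
\sum_{j=1}^{k} F_k(k-j+i) \ = \ F_k(i+k)
\]
is another instance of the $k$-nacci recurrence. Substituting the shift formula into $L_{n+1}(i)$ splits the sum into two pieces: the $F_k(n+k-1)$ contributions collect, via the telescoping identity, into $F_k(n+k-1) \cdot F_k(i+k)$; the $S_{k-j-1}(n)$ contributions reindex by $j \mapsto j+1$ into $L_n(i+1) - S_{k-1}(n) F_k(k+i)$. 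Since $S_{k-1}(n) = F_k(n+k-1)$ by the base case, the two extra terms cancel, giving the diagonal shift $L_{n+1}(i) = L_n(i+1)$. Induction on $i$ then closes the argument: assuming $L_n(i) = F_k(n+i+k-1)$ for all $n \geq 1$, we get $L_n(i+1) = L_{n+1}(i) = F_k((n+1)+i+k-1) = F_k(n+(i+1)+k-1)$.

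The main obstacle is identifying the shift formula for $S_i$ that decouples the recurrence-in-$n$ structure of $S_i$ from the recurrence-in-$i$ structure of $F_k$; once this is spotted, the cancellation of the auxiliary $F_k(n+k-1) F_k(i+k)$ term is essentially forced, but must be tracked carefully through the reindexing. An alternative route would be to verify the identity for $k$ consecutive values $i = 0, 1, \ldots, k-1$ and invoke the fact that both sides satisfy the $k$-nacci recurrence in $i$, but this requires $k$ base computations rather than one.
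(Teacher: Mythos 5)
Your proof is correct and is essentially the paper's argument: both hinge on the diagonal shift $L_n(i+1)=L_{n+1}(i)$ (the paper phrases it as ``replace every $n$ by $n+1$ and every $i$ by $i-1$,'' using the identity $S_{k-1}(n)+S_\ell(n)=S_{\ell+1}(n+1)$, which is your shift formula in disguise) together with the collapse of the sum at $i=0$ to $S_{k-1}(n)=F_k(n+k-1)$. Your version merely packages the paper's ``repeat this process'' step as a formal induction on $i$, which is a somewhat cleaner write-up of the same idea.
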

\begin{proof}

    Expanding this sum, we have
    \begin{align}
        \notag &S_0F_k(i) + S_1F_k(i+1)+\cdots + S_{k-1}F_k(i+k-1)\\ \notag
        = \ &S_{k-1}F_k(i-1) + (S_{k-1}+S_0)F_k(i) + (S_{k-1}+S_1)F_k(i+1) + \cdots \\ \label{skthing} &\cdots +(S_{k-1}+ S_{k-2})F_k(i+k-2).
    \end{align}
Now note
\begin{align}
\notag S_{k-1} + S_{\ell}  \ = \  2&F_k(n+k-2) + 2F_k(n+k-3) + \cdots \\ \notag & \cdots  + 2F_k(n+k-\ell-2) + F_k(n+k-\ell-3) + \cdots+ F_k(n-1) \\
 = \ &F_k(n+k-1)+\cdots + F_k(n+k-\ell-2). \label{skplussl}
\end{align}
We compare this to
\begin{equation}
    S_{\ell}  \ = \  F_k(n+k-2)+\cdots + F_K(n+k-\ell-2),
\end{equation}
and see that \eqref{skplussl} is indexed one higher, with one additional term. Applying this to \eqref{skthing} gives that we can replace all instances of $n$ in the expansion with $n+1$, as long as we replace all instances of $i$ with $i-1$.

We can repeatedly apply this, replacing all $n$'s with $(n+i)$'s and all $i$'s with $0$'s. Notice now that the only term that remains non-zero is $S_{k-1}F_k(i+k-1)$. By definition, $S_{k-1} = F(n+k-1)$. Then the term $F_k(n+k-1)F_k(i+k-1)$ becomes $F_k(n+i+k-1)$ under the change of $i$'s and $n$'s, as required.
\end{proof}

\begin{theorem} \label{secondfundamentaltheorem}
    In the Conway $(m,k,1)$-game, for $m>1$, it is always possible to reach the highest row theoretically attainable, namely
    \begin{equation}
        \floorof{\mathrm{log}_{\phi_{k}} (m)-\mathrm{log}_{\phi_{k}}(\phi_{k}-1)} + 1.
    \end{equation}

    In the case $m=1$, it is only possible to reach the first row in finitely many moves.
\end{theorem}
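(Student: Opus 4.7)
For the upper bound I would apply the pagoda function $p(x) = \alpha^{|x-n|}$ with $\alpha = 1/\phi_k$ and target at row $n$. In 1D the initial energy is
\[
E_0 \ = \ \sum_{i=0}^{\infty} m \alpha^{n+i} \ = \ \frac{m\alpha^n}{1-\alpha} \ = \ \frac{m}{(\phi_k - 1)\phi_k^{n-1}}.
\]
To reach row $n$, the target must contain a checker at $T$ contributing weight $1$, so by Theorem~\ref{fundamentaltheoremofconwaycheckers} we need $E_0 \geq 1$. Any finite sequence of moves changes the board at only finitely many positions; but the initial state has infinitely many checkers at rows $\leq 0$, so if $E_0 = 1$ the target energy (being bounded between $1$ and $E_0 = 1$) would be exactly $1$, forcing the target to be the single checker at $T$ — a state differing from the initial one at infinitely many positions, impossible. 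Hence $E_0 > 1$ strictly, which rearranges to $n < \logPhi{m} - \logPhi{\phi_k - 1} + 1$. For integer $m \geq 2$ and $k \geq 2$ the right side is not an integer (avoiding the edge case $(k,m) = (2,1)$ where $\varphi^2 - \varphi = 1$), giving $n_M \leq \floorof{\logPhi{m} - \logPhi{\phi_k - 1}} + 1$.

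For the matching lower bound with $m \geq 2$, by Lemma~\ref{nnaccijumping} it suffices to produce the configuration with $S_{k-i-1}(n)$ checkers at row $-i$ for $i = 0, \ldots, k-1$, from which we can reach row $n$. I would build this by induction on $n$. The base case $n = 1$ requires $(1, 1, \ldots, 1)$ across rows $0, -1, \ldots, -(k-1)$, already present in the initial state. For the inductive step, I would augment the configuration for $n - 1$ by a carefully ordered sequence of upward ``feeder jumps'' of length $k$ — each such jump consumes one checker from each of $k$ consecutive rows and places one at the row above, and thus preserves the pagoda energy. The condition $m \geq 2$ is essential here: it supplies each intermediate row with enough padding so that successive feeder jumps realizing the increments $S_{k-i-1}(n-1) \to S_{k-i-1}(n)$ can be performed without depleting any intermediate row mid-construction.

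For $m = 1$ the pagoda argument already pinpoints $n_M = 1$. Setting $n = 2$ gives $E_0 = 1/(\phi_k^2 - \phi_k)$, and $\phi_k^2 - \phi_k = \phi_k(\phi_k - 1) \geq 1$ with equality iff $k = 2$ (since $\varphi^2 - \varphi = 1$ while $\phi_k > \varphi$ for $k \geq 3$). So $E_0 \leq 1$, which is incompatible with reaching row $2$: for $k \geq 3$ by the strict inequality $E_0 < 1$ violating Theorem~\ref{fundamentaltheoremofconwaycheckers}(i), and for $k = 2$ by the same finite-move obstruction used in the upper bound. Row $1$ is reachable by the single jump from row $-(k-1)$ over rows $-(k-2), \ldots, 0$ to row $1$, so $n_M = 1$.

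The main obstacle is the inductive feeder-jump construction for $m \geq 2$: rigorously ordering the jumps so that no intermediate row is prematurely depleted, with counts controlled by the $k$-nacci recurrences for $S_{k-i-1}(n)$ (in particular via the identity of Lemma~\ref{Sidentity}). This reduces to a careful bookkeeping argument exploiting the slack provided by $m \geq 2$, which I expect to be the technically deepest part of the proof.
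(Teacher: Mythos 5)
Your upper bound is the paper's argument almost verbatim (the pagoda function with $\alpha=1/\phi_k$, $E_0=m\alpha^n/(1-\alpha)$, and the observation that strictness is only needed in the $m=1$, $k=2$ edge case because finitely many moves leave infinitely many checkers behind), and your treatment of $m=1$ is sound. The lower bound, however, has a genuine gap.

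Your induction on $n$ contains no mechanism that makes it stop at $n=\floorof{\logPhi{m}-\logPhi{\phi_{k}-1}}+1$. As described, the inductive step --- ``augment the configuration for $n-1$ by feeder jumps, using the padding supplied by $m\geq 2$'' --- would apply for every $n$, proving every row reachable and contradicting your own upper bound. The constraint on $n$ cannot come from local padding at each row; it must come from a global accounting of how many checkers have to be funneled \emph{through} row $-i$ against how many are available at and below it, and that accounting is entirely absent from your plan. This is precisely what the paper's proof consists of: the number of checkers that must be added to row $-i$ (both to realize $S_{k-i-1}(n)$ there and to feed every higher row) is
\begin{equation*}
\sum_{j=1}^{k} S_{k-j}(n)F_k(k-j+i) - a_i m,
\end{equation*}
where $a_i$ solves the inhomogeneous recurrence of Lemma \ref{knacciplusone}; by Lemma \ref{Sidentity} this equals $F_k(n+i+k-1)-\bigl(\sum_{j=0}^{i}F_k(k-1+j)\bigr)m$, and by Corollary \ref{asymptoticfk} this deficit is eventually non-positive exactly when $\phi_{k}^{\,n-1}\leq m/(\phi_{k}-1)$. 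That inequality is where the bound on $n$ enters the lower-bound argument, and once the deficit is non-positive from some depth on, the construction proceeds upward from that depth using only a finite portion of the column. You cite Lemma \ref{Sidentity} as a bookkeeping aid, but in fact this deficit computation \emph{is} the lower bound; deferring it as ``the technically deepest part'' leaves the theorem unproved. (A smaller point: the role of $m>1$ is not to provide slack for ordering jumps --- the paper remarks the moves can be staggered even under a cap of $m$ per square --- but to make the deficit condition satisfiable with strict inequality; for $m=1$, $k=2$, $n=2$ the deficit is identically $1>0$ at every depth and the construction genuinely fails.)
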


\begin{proof}

We first work for $m>1$. Along the same lines as the $k=2$ case, and inspired by Lemma \ref{nnaccijumping}, hoping to reach the $n$\textsuperscript{th} row, we want to add the following amounts of checkers to each square from row 0 down.

\begin{itemize}
    \item We want to add $S_{k-1}-m$ checkers to row $0$, to reach $S_{k-1}$ checkers.
    \item We want to add $S_{k-2}-m$ checkers to row $-1$, so that it has $S_{k-2}$ checkers.
    \item We also need to add $S_{k-1}-m$ checkers to row $-1$, so we have enough checkers to support adding $S_{k-1}-m$ to row $0$. This is a total of $S_{k-1}+S_{k-2}-2m$ checkers to be added.
    \item We need to add $S_{k-3}-m$ checkers to row $-2$.
    \item In row $-2$, we also need enough checkers to support adding the appropriate amount in the two rows above (as long as $k \geq 2$), which is $S_{k-2}+2S_{k-1}-3m$.
    \item Continue this down, in row $-i$, add the number of checkers required in that square minus $m$ ($(0 - m)$ for rows further down than $-(k-1)$, since they don't require any checkers in to reach the $n$\textsuperscript{th} row) and then add the sum of the checkers needed to be added in the $k$ rows above.
\end{itemize} 

This is shown pictorially for the first few moves in Table \ref{tab:needednnacci} below.

\begin{table}[ht]
\small
    \centering
    \begin{tabular}{|c|c|c|}
    \hline
        row & target & amount needed\\
    \hline\hline
        0 & $S_{k-1}$ & $S_{k-1} - m$ \\
    \hline
        $-1$ & $S_{k-2} + S_{k-1} -m$ & $S_{k-2} + S_{k-1} -2m$ \\
    \hline
        $-2$ & $S_{k-3} + S_{k-2} + 2S_{k-1} -3m$ & $S_{k-3} + S_{k-2} + 2S_{k-1} -4m$ \\
    \hline
        $-3$ & $S_{k-4} + S_{k-3} + 2S_{k-2} + 4S_{k-1} -7m$ & $S_{k-4} + S_{k-3} + 2S_{k-2} + 4S_{k-1} -8m$ \\
    \hline
        $-4$ & $S_{k-5} + S_{k-4} + 2S_{k-3} + 4S_{k-2} + 8S_{k-1} - 15m$ & $S_{k-5} + S_{k-4} + 2S_{k-3} + 4S_{k-2} + 8S_{k-1} - 16m$ \\
    \hline
        $\vdots$ & $\vdots$ &  $\vdots$ \\
    \end{tabular}
    \caption{Amount needed to be added to each row.}
    \label{tab:needednnacci}
\end{table}

It immediately follows that the number of checkers required to be added into row $-i$ is

\begin{equation}
\sum_{j=1}^{k} S_{k-j}(n) F_k(k-j+i) - a_im,
\end{equation}
where $a_i$ satisfies $a_i = a_{i-1} + a_{i-2} + a_{i-3} + \dots + a_{i-k} + 1$ with initial conditions $(a_0,\dots,a_{k-1}) = (1,2,4,\dots, 2^{k-1})$. From Lemma \ref{knacciplusone}, this has solution

\begin{equation}
a_i  \ = \  \sum_{j=0}^{i}F_k(k-1+j),
\end{equation}
so the total needed to be added to row $-i$ is 

\begin{equation}
\sum_{j=1}^{k} S_{k-j}(n)F_k(k-j+i) - \left( \sum_{j=0}^{i}F_k(k-1+j) \right)m.
\end{equation}
By Lemma \ref{Sidentity} we then have that the number of checkers to be added to row $-i$ is
\begin{equation}
F_k(n+i+k-1) - \left( \sum_{j=0}^{i}F_k(k-1+j) \right)m.
\end{equation}

We then seek a condition on this being eventually non-positive for consecutive rows, then no more checkers are needed on the squares to jump up to row $n$ (again note that this is in only a finite section of the column). 

Using Corollary \ref{asymptoticfk}, we have
\begin{align}
    \notag &F_k(n+i+k-1) - \left( \sum_{j=0}^{i}F_k(k-1+j) \right)m \\ \notag
    \sim \ \ &c\phi_{k}^{n+i+k-1}-cm\left(\phi_{k}^{k-1}+ \cdots + \phi_{k}^{k-1+i}\right) \\ \notag
    \sim \ \ &c\phi_{k}^{k-1}\left(\phi_{k}^{n+i}- m \left(\frac{\phi_{k}^{i+1}}{\phi_{k}-1}\right)\right) \\
    = \ \ &c\phi_{k}^{i+k-1}\left(\phi_{k}^n - \frac{m\phi_{k}}{\phi_{k}-1}\right),
\end{align}

which is eventually non-positive if
\begin{equation}
    \phi_{k}^{n-1} - \frac{m}{\phi_{k}-1} \ < \ 0.
\end{equation}

Solving this gives 

\begin{equation} \label{lowerbound1col}
n_M \ \geq \ \floorof{\text{log}_{\phi_{k}} (m)-\text{log}_{\phi_{k}}(\phi_{k}-1)} + 1.
\end{equation}

We now find an upper bound. Using the standard pagoda function and energy arguments, we have that the total energy of the board is $E_0= \frac{m\alpha^n}{1-\alpha}$, where $\alpha = 1/\phi_{k}$. We solve for $E_0 > 1$, giving

\begin{equation} \label{upperbound1col}
n_M \ \leq \ \floorof{\text{log}_{\phi_{k}}(m) - \text{log}_{\phi_{k}}(1-\alpha)}  \ = \  \floorof{\text{log}_{\phi_{k}} (m)-\text{log}_{\phi_{k}}(\phi_{k}-1)} + 1.
\end{equation}

The upper and lower bound then match, and we are done. Note that in the case $m=1$ and $k=2$ this upper bound
\begin{equation}
    n_M \ \leq \ \floorof{\logphi{m}-\logphi{\varphi-1}} + 1
\end{equation}
is strict, since both sides are integers and taking floors does not change the strictness from requiring $E_0 > 1$. Hence we have $n_M < 2$, and it is clearly possible to reach the first row, so $n_M=1$.
\end{proof}

Unfortunately, we cannot easily use the same idea in $d$-dimensions, since there is much more freedom as to what is a `good' move, and so we can't only consider moves in one direction. Instead, we will consider the $d$-dimensional game as multiple instances of the 1-dimensional game and reduce the problem to a more manageable form.

\begin{remark}
    Interestingly, if we impose the restriction that each square can never contain more than $m$ checkers, this algorithm still works, since we can stagger the moves we make (using a greedy algorithm) so as to never breach this cap.
\end{remark}

\subsection{Bounds on checkers in row 1}

In order to project into lower dimensions, we find the maximum number of checkers that can be placed on row 1.

\begin{lemma} \label{amountaddedfirstcol}
    In the Conway $(m,k,1)$-game, it is always possible to add $\floorof{\frac{m}{\phi_{k}-1}}$ checkers to the first row, where $\phi_{k}$ is the $k$-nacci constant. Further, it is never possible to add $\floorof{\frac{m}{\phi_k-1}}+1$ checkers.
\end{lemma}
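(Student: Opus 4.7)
The plan is to prove matching upper and lower bounds, each equal to $\floorof{m/(\phi_{k}-1)}$.

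For the upper bound, I take the target $T$ to be the unique square on row $1$ and use the pagoda function $p(x) = \alpha^{d(x,T)}$ with $\alpha = 1/\phi_{k}$, as in Conway's original argument. Summing weights over the initial configuration gives
\[
E_{0} \ = \ m\sum_{i=0}^{\infty}\alpha^{i+1} \ = \ \frac{m\alpha}{1-\alpha} \ = \ \frac{m}{\phi_{k}-1},
\]
and since each checker placed on row $1$ contributes weight exactly $1$, no more than $\floorof{m/(\phi_{k}-1)}$ checkers can sit there.

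For the lower bound, set $N := \floorof{m/(\phi_{k}-1)}$ and aim to perform $N$ upward jumps landing in row $1$. Letting $J_{j}$ denote the number of jumps landing in row $j$, I prescribe $J_{1}=N$, $J_{j}=0$ for $j \geq 2$, and choose the remaining $J_{j}$ so that every non-positive row is exhausted. This forces the recursion
\[
J_{j} \ = \ \sum_{\ell=1}^{k} J_{j+\ell} - m \qquad (j \leq 0).
\]
An induction in the spirit of the proof of Theorem \ref{secondfundamentaltheorem} (using Lemmas \ref{knacciplusone} and \ref{Sidentity}, and the fact that $S_{k-j}(1)=1$ for every $j$) yields the closed form
\[
J_{-i} \ = \ N\, F_{k}(i+k) - a_{i}\, m,
\]
where $a_{i}$ is the sequence defined in Lemma \ref{knacciplusone}.

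The main step is confirming that this supply chain terminates at finite depth, i.e. that $J_{-i} \leq 0$ for some $i$. By Corollary \ref{asymptoticfk}, $F_{k}(i+k) \sim c\phi_{k}^{i+k}$ and $a_{i} \sim c\phi_{k}^{i+k}/(\phi_{k}-1)$, so
\[
J_{-i} \ \sim \ c\,\phi_{k}^{i+k}\left(N-\frac{m}{\phi_{k}-1}\right).
\]
The only obstacle is the borderline case $N = m/(\phi_{k}-1)$, which I rule out by noting that $\phi_{k}$ is an algebraic integer lying in $(1,2)$ and so cannot be rational; hence $m/(\phi_{k}-1)$ is irrational and $N < m/(\phi_{k}-1)$ strictly. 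The asymptotic constant is then negative, so $J_{-i} < 0$ for all sufficiently large $i$, and we truncate the construction at such a depth. Scheduling the resulting jumps from the bottom upward then realises $N$ checkers on row $1$, matching the pagoda bound and completing the proof.
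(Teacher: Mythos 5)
Your proposal is correct and follows essentially the same route as the paper: the identical pagoda-function upper bound $E_0 = m/(\phi_k-1)$, and a lower bound via the same bottom-up supply recursion with closed form $F_k(i+k)N - a_i m$, shown to be eventually negative by the asymptotics of Corollary \ref{asymptoticfk}. Your explicit justification that $N < m/(\phi_k-1)$ is strict (because $\phi_k$ is a non-integral algebraic integer, hence irrational) is a small improvement on the paper, which asserts the strict inequality without comment.
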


\begin{proof}

We first find the upper bound. Set the target square to be on the first row, then we have that the initial energy of the board is 

\begin{equation}
    E_0  \ = \  \frac{m\alpha}{1-\alpha}  \ = \  \frac{m}{\phi_{k}-1}.
\end{equation}
The energy of the target square is 1, so we can have at most $\floorof{E_0}$ checkers in row 1.\bigskip

We now want to find a lower bound. To put $\floorof{\frac{m}{\phi_{k}-1}}$ checkers onto the first row, we require that many checkers on each of rows 0 to $-(k-1)$.

We set $M = \floorof{\frac{m}{\phi_{k}-1}}$, then have the following sequence.

\begin{itemize}
    \item Row $0$ needs $M$ checkers, so we need to add $M-m$.
    \item Row $-1$ also needs $M$ checkers, however we also need enough checkers to add $M-m$ to the first row. Hence we need to add $2M-2m$, since there are already $m$ checkers on the square.
    \item Row $-2$ again needs $M$ checkers, as well as enough checkers to support jumping into both the first and second rows. Hence we need to add $4M-4m$ checkers to this row
    \item Continue this down the board, always summing the amount needed to be added on the $k$ squares above.
\end{itemize}

From this, we get that row $-i$ requires

\begin{equation}
    F_k(k+i) M - a_im
\end{equation}
checkers, where $a_i$ satisfies $a_i = a_{i-1}+\cdots+a_{i-k}+1$. From Lemma \ref{knacciplusone} we have that number of checkers needed to be added to row $-i$ is

\begin{equation}
    F_k(k+i)M - \sum_{j=0}^{i}F_k(k-1+i)m.
\end{equation}
As usual, we want this to be eventually negative, or equal to zero. To do this, we again use Corollary \ref{asymptoticfk}.

\begin{align}
    \notag &F_k(k+i)M - \sum_{j=0}^{i}F_k(k-1+i)m \\ \notag
    \sim \ \ &c\phi_{k}^{k+1}M - c\left( \phi_{k}^{k-1}+ \cdots + \phi_{k}^{k-1+i} \right)m \\ \notag
    \sim \ \ &\phi_{k}^{k+i}M - \phi_{k}^{k-1} \frac{\phi_{k}^{i+1}}{\phi_{k}-1}m \\
    \sim \ \ &\phi_{k}^{k+i}\left(M-\frac{m}{\phi_{k}-1}\right).
\end{align}
This is eventually less than or equal to zero if $M-\frac{m}{\phi_{k}-1} < 0$. Notice

\begin{align}
    M-\frac{m}{\phi_{k}-1}  \ = \  \floorof{\frac{m}{\phi_{k}-1}} - \frac{m}{\phi_{k}-1} \ < \ 0,
\end{align}
so we are done, it is always possible to add $\floorof{\frac{m}{\phi_{k}-1}}$ checkers onto the first row. Note again that this only uses a finite section of the column. This also holds when $m=1$, since $\phi_k \geq \varphi$.
\end{proof}

\subsection{Bounds in \texorpdfstring{$d$}{Lg} dimensions}

We now have all the tools we need to prove Theorem \ref{mainresult}. We first prove the upper bound.

\begin{lemma}
    In the Conway $(m,k,d)$-game, with $\phi_{k}$ the $k$-nacci constant,
    \begin{equation}
        n_M \ \leq \ \floorof{\logPhi{m}+\logPhi{\frac{(\phi_{k}+1)^{d-1}}{(\phi_{k}-1)^d}}} + 1.
    \end{equation}
\end{lemma}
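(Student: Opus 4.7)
The plan is to build a pagoda function on $\ZZ^d$ by fixing a target cell $T$ in row $n$ and assigning each cell $x$ the weight $p(x) = \alpha^{d(x,T)}$, where $\alpha = 1/\phi_k$ and $d(\cdot,T)$ is the taxicab distance of Definition \ref{dfunc}. Since the initial configuration is invariant under horizontal translation, the initial energy $E_0$ depends only on the height of $T$, so we are free to pick $T$ in hindsight to be any cell in row $n$ that actually receives a checker. The stated upper bound then falls out of the necessary condition $E_0 \geq 1$ coming from part (i) of Theorem \ref{fundamentaltheoremofconwaycheckers}.

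First I would verify that $p$ is a pagoda function. A legal move in the Conway $(m,k)$-game involves $k+1$ consecutive cells along some coordinate axis, so along the move direction $d(x,T)$ splits as a constant contribution from the other $d-1$ coordinates plus $|x_j - T_j|$. After factoring out the constant, the pagoda inequality reduces to the one-dimensional statement
\begin{equation}
\alpha^{|a+k - T_j|} \ \leq \ \sum_{i=0}^{k-1} \alpha^{|a+i - T_j|} \qquad (a \in \ZZ).
\end{equation}
Three cases arise. When the move is monotonically toward $T_j$, equality holds by the defining relation $1 = \alpha + \cdots + \alpha^k$ for $\alpha = 1/\phi_k$. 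When the move is monotonically away, factoring $\alpha^a$ reduces the inequality to $\alpha^k \leq 2 - \alpha^k$, immediate since $\alpha<1$. When the move straddles $T_j$, one of the exponents $|a+i - T_j|$ on the right-hand side is zero, so the right-hand side alone contains $\alpha^0 = 1$, which already exceeds $\alpha^{|a+k-T_j|}$.

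Next I would compute $E_0$ by placing $T = (0,\ldots,0,n)$ and exploiting the product structure $\alpha^{d(x,T)} = \prod_{j=1}^{d} \alpha^{|x_j - T_j|}$. The sum factors as
\begin{equation}
E_0 \ = \ m \left( \sum_{x \in \ZZ} \alpha^{|x|} \right)^{\!d-1} \cdot \sum_{h \leq 0} \alpha^{n-h} \ = \ m \left( \frac{1+\alpha}{1-\alpha} \right)^{\!d-1} \cdot \frac{\alpha^n}{1-\alpha}.
\end{equation}
Substituting $\alpha = 1/\phi_k$ and simplifying yields $E_0 = m \phi_k^{1-n} (\phi_k+1)^{d-1}/(\phi_k-1)^d$. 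If row $n$ is reached, the final energy is at least $p(T) = 1$, so $E_0 \geq 1$; taking $\log_{\phi_k}$, rearranging, and applying $\floorof{x+1} = \floorof{x} + 1$ gives the claimed bound.

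The only real subtlety is the straddling case of the pagoda check, which cannot arise in the 1-dimensional setting of Theorem \ref{secondfundamentaltheorem} because that game is played on a half-line below row $0$; the trick of reading off $\alpha^0$ on the right-hand side dispatches it immediately. Everything else is a routine transcription of Conway's original argument into $d$ dimensions with $k-1$ intermediate checkers, with the geometric series factorization replacing the single exponential sum that appears in Section 3.
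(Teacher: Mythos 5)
Your proof is correct and takes essentially the same route as the paper: weight the board by $\alpha^{d(x,T)}$ with $\alpha = 1/\phi_k$, compute the initial energy $E_0 = m\alpha^n(1+\alpha)^{d-1}/(1-\alpha)^d$ by factoring the sum over coordinates, and deduce the bound from $E_0 \geq 1$ and part (i) of Theorem \ref{fundamentaltheoremofconwaycheckers}. The only difference is that you explicitly verify the pagoda inequality in the three cases (toward, away, straddling), which the paper leaves implicit with ``as usual''; your verification is sound.
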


\begin{proof}
    To find the initial energy in $d$-dimensions, we sum the energies of subspaces of dimension $d-1$. In particular, the energy of the board in $d$-dimensions, $E(d)$, is
    \begin{equation}
        (1+2\alpha+2\alpha^2+ \cdots)E(d-1)  \ = \  \frac{1+\alpha}{1-\alpha}\cdot E(d-1).
    \end{equation}
    We have $E(1) = \frac{m\alpha^n}{1-\alpha}$, so
    \begin{equation} \label{Eddim}
        E(d)  \ = \  m\alpha^n\frac{(1+\alpha)^{d-1}}{(1-\alpha)^d}.
    \end{equation}
    As usual, we solve for $E(d) > 1$ and take floors to obtain the bound 
    \begin{equation}
        n_M \ \leq \ \floorof{\logPhi{m}+\logPhi{\frac{(\phi_{k}+1)^{d-1}}{(\phi_{k}-1)^d}}} + 1.
    \end{equation}
\end{proof}

Note that when we set $k=2$, $m=1$ and hence $\phi_{k} = \varphi$ this gives
\begin{equation}
    n_M \ < \ 3d-1,
\end{equation}
where we have strictness since  in this case

\begin{equation}
\logPhi{m}+\logPhi{\frac{(\phi_{k}+1)^{d-1}}{(\phi_{k}-1)^d}}
\end{equation}
is an integer. This agrees with the bounds found in \cite{ddimpaper} and \cite{desert}.\bigskip

\begin{theorem}
    In the Conway $(m,k,d)$-game, it is possible to reach the theoretically highest possible row for almost all choices of $m$.
\end{theorem}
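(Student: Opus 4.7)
The plan is to construct explicitly a sequence of moves that attains the upper bound for almost all $m$, thereby matching the earlier energy estimate. The strategy is to reduce the $d$-dimensional game to a single-column problem already solved by Theorem~\ref{secondfundamentaltheorem}, using a $(d-1)$-dimensional ``collection'' lemma as the key intermediate step. This lemma would state: in a fully-filled $\ZZ^{d-1}$ with $m$ checkers on every cell, one may gather approximately $m\left(\frac{\phi_{k}+1}{\phi_{k}-1}\right)^{d-1}$ checkers onto a prescribed cell in finitely many moves, touching only finitely many cells.

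First I would prove the collection lemma by induction on the dimension. The base case ($d-1=1$) applies Lemma~\ref{amountaddedfirstcol} to both half-lines of $\ZZ$ extending away from the target cell, pushing $\floorof{m/(\phi_{k}-1)}$ checkers onto the target from each side; adding the $m$ checkers already at the target yields a total of $m+2\floorof{m/(\phi_{k}-1)}$. The two half-line constructions touch disjoint regions apart from the target itself and may be executed sequentially without interference. For the inductive step I would decompose $\ZZ^{d-1}$ as $\ZZ\times\ZZ^{d-2}$, apply the inductive hypothesis inside each $\ZZ^{d-2}$ slice to concentrate its checkers onto a single cell on a common line parallel to the $\ZZ$ factor, and then invoke the base case along that line to pick up the final factor of $(\phi_{k}+1)/(\phi_{k}-1)$.

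Next, I would apply the collection lemma to each of a finite number of cross sections of $\ZZ^d$ perpendicular to the ``up'' axis at rows $h\in\{-N,\ldots,0\}$ for a suitably large $N$. Because no move inside a fixed cross section alters the row coordinate, the constructions in different cross sections do not interfere and can be executed independently. The result is a finite vertical column in which each cell at rows $-N,\ldots,0$ carries $M := \floorof{m\left(\frac{\phi_{k}+1}{\phi_{k}-1}\right)^{d-1}}$ checkers (up to a bounded error coming from the iterated rounding). Applying Theorem~\ref{secondfundamentaltheorem} to this column then places a checker at row $\floorof{\logPhi{M}-\logPhi{\phi_{k}-1}}+1$, provided $N$ was chosen large enough that the 1-dimensional construction fits inside the built column.

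Finally, I would compare the achieved row with the upper bound. The discrepancy between $\logPhi{M}$ and $\logPhi{m}+(d-1)\logPhi{(\phi_{k}+1)/(\phi_{k}-1)}$ is $O(1/m)$, so the two floors agree unless $\logPhi{m}+\logPhi{(\phi_{k}+1)^{d-1}/(\phi_{k}-1)^{d}}$ falls within $O(1/m)$ of an integer; the set of such exceptional $m$ has asymptotic density zero. The principal obstacle will be the collection lemma itself: tracking the auxiliary checkers needed to support intermediate jumps across dimensions, and controlling the additive rounding losses carefully enough that the effective $M$ is large enough to saturate the upper bound for almost all $m$.
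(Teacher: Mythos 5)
Your proposal is correct and follows essentially the same route as the paper: both reduce to the one-dimensional results by iteratively applying Lemma \ref{amountaddedfirstcol} to collect a factor of $(\phi_{k}+1)/(\phi_{k}-1)$ per collapsed dimension (with an $O(1)$ rounding loss each time), then invoke Theorem \ref{secondfundamentaltheorem} on the resulting column and observe that the accumulated error perturbs the logarithm by $O(1/m)$, so the floor is unaffected for almost all $m$. The only cosmetic difference is that you concentrate each horizontal cross-section onto a single cell of a common column, whereas the paper collapses one coordinate at a time onto successive hyperplanes; the bookkeeping is identical.
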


\begin{proof}

When projecting from $\ZZ^d$ to $\ZZ^{d-1}$, it is possible to put $m + 2\floorof{\frac{m}{\phi_{k}-1}} = \frac{\phi_{k} +1}{\phi_{k}-1}m - \varepsilon_{d-2}$ checkers onto every square in the space of lower dimension, where $\varepsilon_{d-2} \in (0,2)$ is twice the error from the floor function.

Hence, from $\ZZ^d$ and projecting down to $\ZZ$, we have that on each square we can place

\begin{equation}
    \left(\frac{\phi_{k}+1}{\phi_{k}-1}\right)^{d-1}m - \sum_{i=0}^{d-2} \left(\frac{\phi_{k}+1}{\phi_{k}-1}\right)^{i}\varepsilon_i
\end{equation}
checkers. Note this is possible in finite time since all the algorithms used only require a finite section of the board.

\begin{definition} \label{Cddefgeneral}
    
We write $C(d)$ for a constant of the form

\begin{equation} 
    C(d)  \ = \  \sum_{i=0}^{d} \left(\frac{\phi_{k}+1}{\phi_{k}-1}\right)^{i}\varepsilon_i,
\end{equation}

for a collection of $\varepsilon_i \in (0,2)$.

\end{definition}

\begin{remark}
    Note that when we write $C(d)$ the collection of $\varepsilon_i$'s is implicit.
\end{remark}

\noindent Applying Theorem \ref{secondfundamentaltheorem}, we then have that the maximum attainable row is
\begin{equation}
\floorof{\logPhi{\left(\frac{\phi_{k}+1}{\phi_{k}-1}\right)^{d-1}m - C(d-2)} - \logPhi{\phi_{k}-1}}+1.
\end{equation}
We also have

\begin{align}
    \notag &\logPhi{\left(\frac{\phi_{k}+1}{\phi_{k}-1}\right)^{d-1} - \frac{C(d-2)}{m}} \\
    = \ &\logPhi{\left(\frac{\phi_{k}+1}{\phi_{k}-1}\right)^{d-1}} + \logPhi{1-\frac{C(d-2)}{m}\cdot \left(\frac{\phi_{k}-1}{\phi_{k}+1}\right)^{d-1}}.
\end{align}
We define the following function.
\begin{definition}\label{mathcalE}
Write
\begin{equation}
    \mathcal{E}(m)  \ := \  -\logPhi{1-\frac{C(d-2)}{m}\cdot \left(\frac{\phi_{k}-1}{\phi_{k}+1}\right)^{d-1}}.
\end{equation}
\end{definition}
\noindent We then have that the maximum attainable row is

\begin{equation}
    \floorof{\logPhi{m} + \logPhi{\frac{(\phi_{k}+1)^{d-1}}{(\phi_{k}-1)^d}} - \mathcal{E}(m)}+1.
\end{equation}

We now fix $k,d$ and write

\begin{equation}
A := \frac{(\phi_k+1)^{d-1}}{(\phi_k-1)^d}.
\end{equation}

Define 
\begin{align}
    S &\ := \ \{m: \text{the maximum row is unreachable in the $(m,k,d)$-game}\} \nonumber\\
    & \ \  =\  \{m : \{\logPhi{Am}\} < \mathcal{E}(m)\},
\end{align}
where $\{\logPhi{Am}\}$ denotes the fractional part of $\logPhi{Am}$.  We will show that $S$ has lower asymptotic density 0. Clearly $\mathcal{E}(m) \to 0$ as $m\to \infty$, so let $\varepsilon>0$, and there is $M$ s.t. $\mathcal{E}(m) \leq \varepsilon$ for each $m > M$. Then we have 

\begin{align}
    S & \ \subseteq \ \{1,...,M\} \ \cup \ \{m: \{\logPhi{Am}\} \leq \varepsilon\}  \nonumber \\
    & \ = \ \{1,...,M\} \ \cup \ \left(\bigcup_{c=1}^\infty \{m : c \leq \logPhi{Am} \leq c+\varepsilon\}\right) \nonumber\\
    & \ = \ \{1,...,M\} \ \cup \ \left(\bigcup_{c=1}^{\infty}\frac{\phi_k^c}{A}[1, \phi_k^\varepsilon]\right).
\end{align}

Since an interval of length $\ell$ contains at most $\ell+1$ integers, the set $\bigcup_{c=1}^{N}\frac{\phi_k^c}{A}[1, \phi_k^\varepsilon]$ contains at most $N + \sum_{c=1}^N\frac{\phi_k^c}{A}(\phi_k^\varepsilon-1)$ integers. Further, $\bigcup_{c=1}^{N}\frac{\phi_k^c}{A}[1, \phi_k^\varepsilon] \subseteq [0, \phi_k^{N+\varepsilon}]$, so the lower asymptotic density of $S$ is at most

\begin{equation}
    \lim_{N\to \infty} \frac{1}{\phi_k^{N+\varepsilon}-1}\left(M+N+ \frac{\phi_k(\phi_k^\varepsilon-1)(\phi_k^N-1)}{A(\phi_k-1)}\right) \ =\  \frac{\phi_k(\phi_k^\varepsilon-1)}{A(\phi_k-1)\phi_k^\varepsilon}.
\end{equation}

Since $A$ is constant and $\varepsilon$ arbitrary, $\frac{\phi_k(\phi_k^\varepsilon-1)}{A(\phi_k-1)\phi_k^\varepsilon} \to 0$ as $\varepsilon \to 0$, so the lower asymptotic density of $S$ is zero.
\end{proof}

An example, projecting $\ZZ^2$ onto $\ZZ$, with $k=2$, is shown here in Table \ref{tab:projection}.

\begin{table}[ht]
    \centering
    \begin{tabular}{c|c|c|c|c|c|c|c|c}
    \hline\hline
        $\cdots$ & 0 & 0 & 0 & $m+2\floorof{\varphi m}$ & 0 & 0 & 0  & $\cdots$\\
    \hline
        $\cdots$ & 0 & 0 & 0 & $m+2\floorof{\varphi m}$ & 0 & 0 & 0  & $\cdots$\\
    \hline
        $\cdots$ & 0 & 0 & 0 & $m+2\floorof{\varphi m}$ & 0 & 0 & 0  & $\cdots$\\
    \hline
    $\vdots$ & $\vdots$ & $\vdots$ & $\vdots$ & $\vdots$ & $\vdots$ & $\vdots$ & $\vdots$ & $\vdots$ \\

    \end{tabular}
    \caption{Projection from $\ZZ^2$ to $\ZZ$.}
    \label{tab:projection}
\end{table}\bigskip

We now consider the cases when the upper bound cannot be reached.

\begin{lemma}
    For any choice of $d \geq 1$ and $m,k > 1$ we have
    \begin{equation}
        \mathcal{E}(m) \ < \ 1,
    \end{equation}
    where $\mathcal{E}(m)$ is as in Definition \ref{mathcalE}. In particular, for $m>1$ it is always possible to attain within one of the upper bound.
\end{lemma}

\begin{proof}
    We have

    \begin{align}
        \notag |\mathcal{E}(m)|  \ = \  & \left\lvert\logPhi{1-\frac{C(d-2)}{m}\cdot \left(\frac{\phi_{k}-1}{\phi_{k}+1}\right)^{d-1}}\right\rvert.
    \end{align}
    We also have $|\logPhi{x}| < 1$ if and only if $x \in (1/\phi_{k}, \phi_{k})$, so we need to show
    \begin{equation}
        \frac{C(d-2)}{m}\cdot \left(\frac{\phi_{k}-1}{\phi_{k}+1}\right)^{d-1} \ \in \ \left(1-\phi_{k}, 1-\frac{1}{\phi_{k}}\right),
    \end{equation}
    or equivalently, since this quantity is positive and $\phi_{k} \in (1,2)$

    \begin{equation}
        \frac{C(d-2)}{m}\cdot \left(\frac{\phi_{k}-1}{\phi_{k}+1}\right)^{d-1} \ < \ 1-\frac{1}{\phi_{k}}.
    \end{equation}

    Note that by the definition of $C(d)$ in Definition \ref{Cddefgeneral}, we have
    \begin{equation} \label{cbound}
        C(d) \ < \ 2\frac{\left(\frac{\phi_{k}+1}{\phi_{k}-1}\right)^{d+1}-1}{\left(\frac{\phi_{k}+1}{\phi_{k}-1}\right) - 1}  \ = \  (\phi_{k}+1)\left(\left(\frac{\phi_{k}+1}{\phi_{k}-1}\right)^{d}-1\right).
    \end{equation}
    Hence we have

    \begin{align} \label{chainofineqs}
        \notag &\frac{C(d-2)}{m}\cdot \left(\frac{\phi_{k}-1}{\phi_{k}+1}\right)^{d-1} \\ \notag < \ \ &\frac{1}{2}(\phi_{k}+1)\left(\left(\frac{\phi_{k}+1}{\phi_{k}-1}\right)^{d-2}-1\right)\left(\frac{\phi_{k}-1}{\phi_{k}+1}\right)^{d-1} \\ \notag 
        < \ \ &\frac{1}{2}(\phi_{k}+1)\left(\frac{\phi_{k}-1}{\phi_{k}+1}- \left(\frac{\phi_{k}-1}{\phi_{k}+1}\right)^{d-1}\right) \\ \notag
        < \ \ &\frac{1}{2}(\phi_{k}-1)\\ < \ \ &\frac{\phi_{k}-1}{\phi_{k}},
    \end{align}
    since $\phi_{k} \in (1,2)$.
\end{proof}

\begin{remark} \label{remark4}
    
Note that in order to bound $\mathcal{E}(m)$ for $m=1$, we would need to find a better bound on $C(d)$. To show $\mathcal{E}(m) < \ell$, we would need
\begin{equation} \label{breaks}
    \frac{C(d-2)}{m}\cdot \left(\frac{\phi_{k}-1}{\phi_{k}+1}\right)^{d-1} \ < \ 1-\frac{1}{\phi_{k}^\ell}.
\end{equation}
Taking $d \to \infty$ after substituting the bound for $C(d-2)$ in a similar manner to \eqref{chainofineqs} gives an upper bound converging to $\phi_k-1$, then taking $k \to \infty$ causes the upper bound to converge to 1, which would cause \eqref{breaks} to break.

\end{remark}

In the case $k=2$, it is possible to explicitly find the candidates where we might fail to reach the upper bound.

\begin{definition}
    We define the \textit{Lucas} numbers by the recurrence
    \begin{equation}
        a_{i+2} \ = \ a_{i+1}+a_{i},
    \end{equation}
    and initial conditions $a_0=2$, $a_1=1$. We denote the $n^\mathrm{th}$ Lucas number as $L(n)$.
\end{definition}

The Lucas numbers can be computed by the formula $L(n) = \varphi^{n} + (-1/\varphi)^n$. When $n$ is even and large, $(-1/\varphi)^{n}$ is a very small positive number, however $L(n)$ is necessarily an integer, so must be very slightly above a power of $\varphi$. Hence the even-indexed Lucas numbers provide candidates for where we might fail to obtain the upper bound.

When $d=2$, the algorithm does in fact fail to achieve the upper bound for all of $L(2), L(4),$ $ \dots, L(30)$. It succeeds, however, at $L(32) = 4870847$, so does not fail at every even-indexed Lucas number.

\subsection{Maximum number of checkers in one square}

We finish with a bound on the maximum number of checkers that can be moved to occupy one square.

\begin{theorem}
    In the Conway $(m,k,d)$-game, it is possible to reach a state with at least
    \begin{equation}
        \left(\frac{\phi_{k}+1}{\phi_{k}-1}\right)^{d}m - C(d-1)
    \end{equation}
    checkers on one square. Further, it is not possible to place more than $\floorof{\left(\frac{\phi_k+1}{\phi_k-1}\right)^dm}$ checkers onto a square.
\end{theorem}

\begin{proof}
    We will first project onto $\ZZ$ in the usual way, obtaining
    \begin{equation}
        \left(\frac{\phi_{k}+1}{\phi_{k}-1}\right)^{d-1}m - C(d-2)
    \end{equation}
    checkers on each square. Since the algorithm in Lemma \ref{amountaddedfirstcol} uses only a finite section of the board, by choosing a square sufficiently far down the board it is possible to add 
    \begin{equation}
        2\floorof{\frac{m}{\phi_{k}-1}} \ = \ \frac{2m}{\phi_{k}-1} - \varepsilon
    \end{equation}
    checkers onto the square, for some $\varepsilon \in (0,2)$. We do this by using the result of Lemma \ref{amountaddedfirstcol} to add $\floorof{\frac{m}{\phi_k-1}}$ onto the square from above and below. We then have that, replacing $m$ with the number of checkers we have amassed on each square, in one square we can place 

    \begin{align}
        \notag &\left(\frac{\phi_{k}+1}{\phi_{k}-1}\right)^{d-1}m - C(d-2) + \frac{2}{\phi_{k}-1}\left(\left(\frac{\phi_{k}+1}{\phi_{k}-1}\right)^{d-1}m - C(d-2)\right) - \varepsilon \\ \notag
        = \ &\left(\frac{\phi_{k}+1}{\phi_{k}-1}\right)^{d}m - \frac{\phi_{k}+1}{\phi_{k}-1}C(d-2)-\varepsilon \\ \label{lowerboundrallying}
        = \ &\left(\frac{\phi_{k}+1}{\phi_{k}-1}\right)^{d}m - C(d-1)
    \end{align}

    checkers.\newline

    We now derive an upper bound. Choose a target square on the board that we will try to add checkers to, and weight the board in the usual way, by powers of $1/\phi_{k}$ defined by the taxicab distance from the target square. By the same argument as in \eqref{Eddim}, we have
    \begin{equation}
        E(d) \ = \ (1+2\alpha+ 2\alpha^2+\cdots)E(d-1) \ = \ \frac{1+\alpha}{1-\alpha} \cdot E(d-1).
    \end{equation}

    In this case, however, if the target square is $n$ squares down, we have 
    \begin{align}
        \notag E(1) \ = \ & \: (1+\alpha + \alpha^2+ \cdots)m + (\alpha+\alpha^2 + \cdots + \alpha^n)m \\
        = \ &\left(\frac{1}{1-\alpha}+ \frac{\alpha(1-\alpha^n)}{1-\alpha}\right)m \ < \ \left(\frac{1+\alpha}{1-\alpha}\right)m.
    \end{align}
    Hence the energy in $d$-dimensions is less than 
    \begin{equation}
        \left(\frac{1+\alpha}{1-\alpha}\right)^{d}m \ = \ \left(\frac{\phi_{k}+1}{\phi_{k}-1}\right)^{d}m,
    \end{equation}
    so it is possible to place at most 
    \begin{equation}
       \floorof{\left(\frac{\phi_{k}+1}{\phi_{k}-1}\right)^{d}m}
    \end{equation}
    checkers onto a single square. Note this asymptotically matches the lower bound obtained in \eqref{lowerboundrallying} as $m$ grows.
\end{proof}

\section{Acknowledgements}

We thank participants of the 21\textsuperscript{st} International Fibonacci Conference for helpful discussions.

\noindent This work was partially supported by NSF grants DMS2241623 and DMS2422706, as well as Emmanuel College Cambridge, Princeton University and Williams College. We also thank the Herschel-Smith Fellowship for their support. Finally, we thank the referees for many helpful comments, which have much improved our paper.

\printbibliography

@article{minsize,
author = {Bell, George I. and Hirschberg, Daniel S. and Guerrero-Garc\'{i}a, Pablo},
title = {The {M}inimum {S}ize {R}equired of a {S}olitaire {A}rmy},
journal = {INTEGERS: Electronic Journal of Combinatorial Number Theory},
volume = {7},
year = {2007}
}

@article{desert,
author = {Martin Aigner},
title = {Moving into the {D}esert with {F}ibonacci},
journal = {Mathematics Magazine},
volume = {70},
number = {1},
pages = {11--21},
year = {1997},
publisher = {Taylor \& Francis},
doi = {10.1080/0025570X.1997.11996492},
}

@article{ddimpaper,
title = {Twin {J}umping {C}heckers in $\mathbb{Z}^d$},
journal = {European Journal of Combinatorics},
volume = {16},
number = {2},
pages = {153--157},
year = {1995},
author = {Eriksson, Hendrick and Lindstr\"{o}m, Bernst}
}

@unpublished{row5,
    author = {Tatham, Simon and Taylor, Gareth},
    title = {Reaching Row 5 in Solitaire Army},
    note = {Can be found on https://tartarus.org/gareth/maths/stuff/solarmy.pdf (as of 3rd of August 2024)}
}

@misc{graphgeneralisation,
      title={Peg solitaire and Conway's soldiers on infinite graphs}, 
      author={Valentino Vito},
      year={2022},
      eprint={2109.11128},
      archivePrefix={arXiv},
      primaryClass={math.CO}, 
}

@article{unitcircle,
    title = {Generalized {F}ibonacci numbers and associated matrices},
    journal = {American Mathematical Monthly},
    volume = {67},
    pages = {745--752},
    year = {1960},
    author = {Miles Jr., E.P.}
}

@article{hexagonal,
    author = {Niculescu, Florentina Rodica
    and Niculescu, Radu Tefan},
    title = {Solitaire army and related games},
    journal = {Mathematical Reports},
    year = {2006},
    volume = {8},
    number = {58},
    pages = {197--217},
}

@article{gunpaper,
    author = {Csakany, Bela and Juhasz, Rozalia},
    title = {The Solitaire Army Reinspected},
    journal = {Mathematics Magazine},
    year = {2000},
    volume = {73},
    number = {5},
    pages = {354--362}
}

@inbook{winningways,
  author    = {Berlekamp, Elwyn R. and Conway, John H. and Guy, Richard K.},
  title     = {Winning Ways for your Mathematical Plays},
  chapter   = {23},
  publisher = {A.K. Peters, Wellesley},
  year      = {2004},
  volume    = {4},
  edition   = {2},
  address   = {Wellesley, Massachusetts},
  pages     = {817--823},
}

@article{unitcircle2,
    author = {Wolfram, D.A},
    title = {Solving Generalized Fibonacci Recurrences},
    journal = {Fibonacci Quarterly},
    year = {1998},
    volume = {36},
    number = {2},
    pages = {129--145}
}

\end{document}